\numberwithin{equation}{section}
\tikzset{vertex/.style={shape=circle,draw,minimum size=3pt,inner sep=0pt}}
\tikzset{black-vertex/.style={shape=circle,draw,inner sep=3pt}}
\tikzset{red-vertex/.style={shape=circle,draw=red,dashed,inner sep=3pt}}
\tikzset{edge/.style={->,>=latex'}}
\theoremstyle{plain}
\newtheorem{mainthm}{Theorem}
\newtheorem{thm}{Theorem}[section]
\newtheorem{prop}[thm]{Proposition}
\newtheorem{cor}[thm]{Corollary}
\newtheorem{lem}[thm]{Lemma}
\theoremstyle{definition}
\newtheorem{exm}[thm]{Example}
\theoremstyle{remark}
\newtheorem{rmk}[thm]{Remark}
\newcommand{\bbA}{\mathbb{A}}
\DeclareMathOperator{\End}{End}
\DeclareMathOperator{\soc}{soc}
\renewcommand{\mod}{\operatorname{mod}}
\DeclareMathOperator{\defect}{def}
\DeclareMathOperator{\dist}{dist}
\DeclareMathOperator{\cyc}{cyc}
\newcommand{\scB}{\mathcal{B}}
\newcommand{\scF}{\mathcal{F}}
\newcommand{\scP}{\mathcal{P}}
\begin{document}

\title[Resolution quiver of Nakayama algebras]
      {The resolution quiver of Nakayama algebras which are minimal Auslander-Gorenstein}
\author{Dawei Shen}
\address{School of mathematics and statistics \\
         Henan University \\
         Kaifeng, Henan 475004 \\
         P. R. China}
\email{sdw12345@mail.ustc.edu.cn}
\subjclass[2020]{Primary 16E10; Secondary 16G20}
\keywords{Nakayama algebra, Auslander-Gorenstein algebra, resolution quiver}
\date{\today}

\begin{abstract}
Let $A$ be a Nakayama algebra. 
Using Ringel's resolution quiver, 
we give a criterion to determine 
whether $A$ is a minimal Auslander-Gorenstein algebra.
The criterion  strongly relies on the parity of the selfinjective dimension of $A$.
\end{abstract}
\maketitle

\section{Introduction}

Following Iyama \cite{Iya07}, an algebra is said to be higher Auslander 
provided that its global dimension is finite and bounded by its dominant dimension.
Higher Auslander algebras generalize the classical Auslander algebras.
Following Iyama and Solberg \cite{IS18}, 
an algebra is said to be minimal Auslander-Gorenstein
provided that its selfinjective dimension is finite and bounded by its dominant dimension.
Minimal Auslander-Gorenstein algebras are the Gorenstein analogues of higher Auslander algebras.
They are a special class of Auslander-Gorenstein algebras.

Nakayama algebras are one of the most fundamental classes of algebras 
in the representation theory of algebras.
An algebra is called a Nakayama algebra 
if it is a homomorphic image of the path algebra of an oriented cycle.
The indecomposable modules over Nakayama algebras are uniserial.
A connected Nakayama algebra is either linear or cyclic.

The resolution quiver for  Nakayama algebras is introduced by Ringel \cite{Rin13}.
It is an efficient tool for studying the homological properties of Nakayama algebras. 
A resolution quiver criterion to detect the finiteness of global dimension 
and the finiteness of  selfinjective dimension for Nakayama algebras is given in \cite{She17}.

The syzygy filtered algebra for cyclic Nakayama algebras is introduced by Sen \cite{Sen19}.
It plays a role in constructing cyclic Nakayama algebras which are higher Auslander algebras \cite{Sen20}.
The reversed syzygy filtered algebra for all Nakayama algebras is introduced in \cite{STZ24}.
It provides a systematic method to construct cyclic Nakayama algebras which are 
higher Auslander and minimal Auslander-Gorenstein.
A classification of defect one cyclic Nakayama algebras is given in \cite{MMG20},
and a classification of  concave linear Nakayama algebras which are higher Auslander is given in \cite{Rin22}.

Inspired by their work, we study the syzygy filtered algebra for all Nakayama algebras.
Then we give a resolution quiver criterion to determine whether a Nakayama algebra 
is a minimal Auslander-Gorenstein algebra.
This criterion  strongly  depends on the parity of the selfinjective dimension.

Let $A$ be a  Nakayama algebra with translation $\tau$ on the simple $A$-modules.
For an $A$-module $M$, denote by $\ell(M)$ the composition length, 
by $P(M)$ the projective cover, and by $I(M)$ the injective envelope of $M$.
The resolution quiver $R(A)$ of $A$ is defined as follows. 
The vertices of $R(A)$ are the isoclasses of all simple $A$-modules, 
and there is a unique arrow from $S$ to $\tau\soc P(S)$ for each  simple $A$-module $S$.
A simple $A$-module $S$ is said to be red if the projective dimension of $S$ is equal to $1$, 
and black otherwise.
This  assigns a color to each vertex of the resolution quiver.

A simple $A$-module $S$ is said to be cyclic if $S$ lies on a cycle in $R(A)$, 
and a leaf if there is no arrow in $R(A)$ ending at $S$.
For any simple $A$-module $S$, we denote by $\dist S$ the minimal integer $d$ 
such that the $d$-th successor of $S$  is cyclic.
Let $\dist A$ be the maximum of $\dist S$ over all simple $A$-modules $S$.

The weight  of $R(A)$ is the sum of  $\ell(P(S))/nc$, 
where $n$ is the rank of $A$, $c$ is the number of cycles in $R(A)$, 
and $S$ runs through all cyclic vertices in $R(A)$.
It turns out that a Nakayama algebra has finite global dimension if and only if 
its resolution quiver is connected of weight $1$.

The following is the main result of this paper.

\begin{mainthm} \label{mainthm:A}
    Let $A$ be a Nakayama  algebra and  $R(A)$  its resolution quiver. 
    Then $A$ is minimal Auslander-Gorenstein of {\bfseries odd} selfinjective dimension if and only if 
    \begin{enumerate}
        \item $\dist S=\dist A>0$ for every leaf $S$ in $R(A)$;
        \item every noncyclic vertex has at most $1$ predecessor in $R(A)$;
        \item $R(A)$ is connected of weight $1$; 
        \item $\tau^{-1}$ of all cyclic vertices in $R(A)$ are black.
    \end{enumerate}
\end{mainthm}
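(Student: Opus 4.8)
The plan is to prove both implications together by induction on $\dist A$, using the syzygy filtered algebra $\epsilon A$ as the reduction step and, for the reverse direction, the reversed syzygy filtered algebra of \cite{STZ24}. Two preliminary normalisations help. A finite product of algebras is minimal Auslander-Gorenstein of a prescribed self-injective dimension exactly when every factor is, and $R(A)$ is the disjoint union of the resolution quivers of the blocks of $A$; so condition (3) lets us reduce to connected $A$ (disposing of the degenerate semisimple case), hence linear or cyclic. Next, combining \cite{IS18} with the standard inequality between dominant and self-injective dimension, ``$A$ is minimal Auslander-Gorenstein of self-injective dimension $m$'' becomes ``$\operatorname{idim}A<\infty$ and $\operatorname{domdim}A=\operatorname{idim}A=m$'', and by \cite{She17} the finiteness of $\operatorname{idim}A$ is already legible in $R(A)$. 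Thus the theorem reduces to the equivalence of conditions (1)--(4) with ``$\operatorname{domdim}A=\operatorname{idim}A$, and this common value is odd''. The linear case is then quickly settled: there $\operatorname{gldim}A<\infty$ and $\operatorname{idim}A=\operatorname{gldim}A$, and the Kupisch series read through $R(A)$ tells exactly when distance-uniformity and the colour conditions make $\operatorname{gldim}A$ meet the dominant dimension in an odd value.

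The core of the argument is a dictionary between $R(A)$ and $R(\epsilon A)$. For $\dist A\ge1$, the simple $\epsilon A$-modules are the non-leaf vertices of $R(A)$, and $R(\epsilon A)$ is obtained from $R(A)$ by deleting every leaf: hence $\dist\epsilon A=\dist A-1$, the unique cycle and its weight survive unchanged, the leaves of $R(\epsilon A)$ form the ``second layer'' of $R(A)$, and the colours of the surviving vertices are controlled by those of $R(A)$. One then checks that this deletion matches the two sides of the statement: conditions (1)--(4) for $A$ are equivalent to conditions (1)--(4) for $\epsilon A$ \emph{together with} a prescription of the outermost layer of $R(A)$---that every leaf sits at distance exactly $\dist A$ (this is (1)), that every noncyclic vertex carries at most one predecessor there (this is (2)), and that the colours adjacent to the cycle obey (4). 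Running in parallel one tracks the homology: the invariant to pin down along the way is $\operatorname{idim}A=2\dist A-1$ ($=\operatorname{domdim}A$), and the inductive step is that $A\mapsto\epsilon A$ lowers both $\operatorname{domdim}A$ and $\operatorname{idim}A$ by exactly $2$---so that the equality and the parity both descend---and this ``drop by $2$'' holds precisely when the outermost layer is arranged as above.

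The base cases are $\dist A\le1$: if $\dist A=0$ then $A$ is self-injective, so $\operatorname{idim}A=0$ is even and, being connected non-semisimple, $A$ has $R(A)$ of weight equal to its Loewy length, at least $2$, so (3) fails---both sides are false; while $\dist A=1$, a one-layer extension of a self-injective Nakayama algebra, is settled by computing $\operatorname{domdim}A$ and $\operatorname{idim}A$ directly from the Kupisch series and checking that (1)--(4) hold there exactly when $A$ is minimal Auslander-Gorenstein with $\operatorname{idim}A=1$, condition (4) being what forces the odd value. The step I expect to be the main obstacle is exactly this parity-and-colour bookkeeping inside the inductive step: proving that $\operatorname{idim}A$ drops by \emph{exactly two}---neither one, nor more---under $A\mapsto\epsilon A$, and identifying condition (4), together with the distance-uniformity of (1), as the precise feature of $R(A)$ that enforces it. Concretely, one must read the minimal injective coresolution of $A$, and the cosyzygies occurring in it, off the walk $S\mapsto\tau\soc P(S)$ on $R(A)$, and show that each admissible local defect---a red $\tau^{-1}$ of a cyclic vertex, a leaf at non-maximal distance, a noncyclic vertex with two predecessors---forces either $\operatorname{idim}A>\operatorname{domdim}A$ or a shift of parity, and that nothing else can go wrong. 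Organising these local computations into the clean ``only if'' direction, and checking that the list of obstructions is exhaustive, is the technical heart of the proof.
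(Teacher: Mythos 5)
Your overall strategy -- induction on $\dist A$ with the syzygy filtered algebra $\varepsilon(A)$ as the reduction step, tracking that both $\mathrm{dom}.\dim$ and $\mathrm{id}_A\,A$ drop by exactly $2$ -- is the same as the paper's. But there are two genuine problems. First, your opening normalisation is false: $R(A)$ is \emph{not} the disjoint union of the resolution quivers of the blocks of $A$, because $\gamma(S)=\tau\soc P(S)$ is defined via the cyclic translation $\tau$ on $\Delta_n$, which jumps between blocks. The paper's example with Kupisch series $(1,3,2,1)$ is a direct sum of two hereditary linear Nakayama algebras whose resolution quiver is connected of weight $1$; indeed the base case of odd selfinjective dimension $1$ consists precisely of such non-connected algebras (Proposition \ref{prop:ausgor1} and Remark \ref{rmk:hereditary}), so ``reduce to connected $A$'' would discard exactly the algebras the base case must handle.

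Second, the step you yourself flag as the ``technical heart'' is not an obstacle you can defer -- it is the content of the theorem, and two of your working assumptions about it are not automatic. (i) The injective map $\varphi\colon R(\varepsilon A)\to R(A)$ does \emph{not} in general preserve vertex colours (the paper's example with Kupisch series $(3,2,1)$); colour preservation requires condition (B), and the analogous drop $\mathrm{dom}.\dim A=\mathrm{dom}.\dim\varepsilon(A)+2$ requires (B$^*$). Both are secured in the paper only after proving that $\mathrm{dom}.\dim A\geq 3$ is \emph{equivalent} to ``$S$ and $\tau^{-1}S$ are black for every leaf $S$'', i.e.\ to $\gamma^{-1}(\gamma S)=\{S\}$ for every leaf (Lemma \ref{lem:domdim3}); this in turn rests on identifying the injective objects of the filtration category $\scF$ via the exact sequence \eqref{eq:inj} (Lemma \ref{lem:inj}) and on the defect count $\defect A=\defect\varepsilon(A)$. (ii) On the resolution-quiver side, you must show that conditions (1)--(2) with $\dist A\geq 2$ actually \emph{imply} $\mathrm{dom}.\dim A\geq 3$ before the reduction can be invoked in the ``if'' direction; this is where the hypothesis that every leaf's successor is noncyclic with a unique predecessor enters. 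Without these lemmas the induction does not close, so as written the proposal is a correct roadmap of the paper's argument with its load-bearing steps missing and one false reduction at the outset.
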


\begin{mainthm} \label{mainthm:B}
    Let $A$ be a  Nakayama algebra and  $R(A)$ its resolution quiver. 
    Then $A$ is minimal Auslander-Gorenstein of {\bfseries even} selfinjective dimension if and only if 
    \begin{enumerate}
        \item $\dist S=\dist A>0$ for every leaf $S$ in $R(A)$;
        \item every noncyclic vertex has at most $1$ predecessor in $R(A)$;
        \item every cyclic vertex has at most $2$ predecessors in $R(A)$;
        \item all cyclic vertices in $R(A)$ are black.
    \end{enumerate}
\end{mainthm}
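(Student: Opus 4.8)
The argument is most naturally run in parallel with the proof of Theorem~\ref{mainthm:A}, the two cases being separated only by the base of an induction; I therefore describe the shared strategy and flag where the parity of the selfinjective dimension intervenes. Throughout put $s=\mathrm{id}_A A$. The engine is the syzygy filtered algebra $\psi(A)$ developed in the preceding sections: it is again a Nakayama algebra, of rank strictly smaller than that of $A$ unless $A$ is selfinjective, and the passage $A\rightsquigarrow\psi(A)$ lowers the selfinjective dimension by $2$ while preserving the property of being minimal Auslander-Gorenstein. Hence $A$ is minimal Auslander-Gorenstein of selfinjective dimension $s\ge 2$ exactly when $\psi(A)$ is minimal Auslander-Gorenstein of selfinjective dimension $s-2$; the recursion terminates at $s=0$, where the condition is that $A$ be selfinjective, and at $s=1$, the base case underlying Theorem~\ref{mainthm:A}. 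So the minimal Auslander-Gorenstein Nakayama algebras of \emph{even} selfinjective dimension are precisely those from which iterating $\psi$ eventually produces a selfinjective Nakayama algebra, and the task is to recognise this tower inside $R(A)$.

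The next ingredient is a precise dictionary between $R(\psi(A))$ and $R(A)$: the vertices of $R(\psi(A))$ form a distinguished set of non-leaf vertices of $R(A)$ --- those surviving one round of ``peeling'' --- its arrows are the corresponding composites, every cyclic vertex of $R(A)$ is carried onto a cyclic vertex of $R(\psi(A))$, connected components and weight transform in a controlled way, and the black/red colouring is shifted $\tau^{-1}$-wise. Two consequences drive the induction. First, $\dist\psi(A)=\dist A-1$, and a leaf $S$ of $R(A)$ with $\dist S=\dist A$ peels to a leaf of $R(\psi(A))$ realising $\dist\psi(A)$, so condition~(1) says exactly that the peeling is uniform over all leaves. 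Second, the predecessor counts change so that a cyclic vertex of $R(A)$ with two predecessors --- the extra one necessarily red --- becomes a cyclic vertex of $R(\psi(A))$ with a single predecessor, and ``all cyclic vertices black'' for $R(A)$ translates into the matching colouring hypothesis for $R(\psi(A))$.

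With the dictionary in place, both implications follow by induction on the rank of $A$. For ``if'': assuming (1)--(4) for $R(A)$, one checks through the dictionary that $R(\psi(A))$ again satisfies (1)--(4) --- conditions (2)--(4) for $\psi(A)$ from (2)--(4) for $A$ together with the colour/predecessor bookkeeping, and (1) from (1) --- so by induction $\psi(A)$ is minimal Auslander-Gorenstein of some even selfinjective dimension $s-2\ge 0$, whence the reduction gives that $A$ is minimal Auslander-Gorenstein of even selfinjective dimension $s$. At the foot of the induction $\psi(A)$ is selfinjective, and there (1)--(4) hold directly: a selfinjective Nakayama algebra has no leaves (so (1) is vacuous), its resolution quiver is a disjoint union of cycles in which every vertex has a unique predecessor (so (2) and (3) hold), and its simple modules have projective dimension $0$ or $\infty$, hence are black (so (4) holds). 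For ``only if'': a minimal Auslander-Gorenstein algebra of even selfinjective dimension descends along $\psi$ to a selfinjective algebra, and one lifts (1)--(4) back up the tower through the same dictionary.

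The main obstacle is the dictionary of the second step --- identifying the exact vertex set of $R(\psi(A))$ inside $R(A)$, following predecessor counts and colours through the peeling, and then checking that conditions~(3)--(4) here genuinely isolate the even-dimensional algebras while the conditions of Theorem~\ref{mainthm:A}, in particular ``$R(A)$ connected of weight $1$'' and ``$\tau^{-1}$ of the cyclic vertices black'', isolate the odd-dimensional ones, with the two classes meeting only where they must. The structural reason for the asymmetry --- and the point to get right --- is that the odd-dimensional algebras are built up from the connected, weight-one base of Theorem~\ref{mainthm:A}, whereas a selfinjective Nakayama algebra generally has a disconnected resolution quiver of arbitrary weight; this is exactly why Theorem~\ref{mainthm:B} carries neither a connectivity nor a weight hypothesis, and any proof must make the behaviour of connectedness and of weight under $\psi$ explicit.
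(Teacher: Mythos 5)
Your overall strategy---induct via the syzygy filtered algebra, using the reduction ``$A$ is minimal Auslander-Gorenstein of selfinjective dimension $s$ iff $\varepsilon(A)$ is so of dimension $s-2$'' together with a dictionary between $R(\varepsilon A)$ and $R(A)$---is the paper's strategy for the inductive step (Corollary \ref{cor:reduction} plus the weight- and color-preserving embedding $\varphi$). But there is a genuine gap at the bottom of your induction. The reduction is only available when $\mathrm{dom}.\dim A\geq 3$ (Lemma \ref{lem:domdim3}), whereas a minimal Auslander-Gorenstein algebra of selfinjective dimension $2$ has dominant dimension exactly $2$; so you cannot push the recursion from $s=2$ down to $s=0$. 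Your terminal characterization---that the algebras in question are ``precisely those from which iterating $\psi$ eventually produces a selfinjective Nakayama algebra''---is in fact vacuous, since the paper notes that $\varepsilon^{r}(A)$ is selfinjective for some $r$ for \emph{every} Nakayama algebra. Concretely, for $A$ with Kupisch series $(3,2,1)$ the algebra $\varepsilon(A)$ is selfinjective (Kupisch series $(1)$), yet $A$ is not minimal Auslander-Gorenstein of selfinjective dimension $2$: its resolution quiver has a red cyclic vertex. Checking that conditions (1)--(4) hold trivially for a selfinjective $\psi(A)$ at the foot of the tower therefore establishes nothing about the algebra one level up.

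What is missing is a self-contained treatment of the base case $s=2$, which the paper supplies as Proposition \ref{prop:ausgor2}: the ``only if'' direction uses Lemma \ref{lem:domdim2} to exclude red cyclic vertices and to bound the number of predecessors of a cyclic vertex by $2$, and the ``if'' direction verifies $\mathrm{dom}.\dim A\geq 2$ directly from the combinatorics of $R(A)$ via the tilting-module criterion of Nguyen--Reiten--Todorov--Zhu; none of this is subsumed by the peeling dictionary. Two smaller points: in the inductive step you must first \emph{derive} $\mathrm{dom}.\dim A\geq 3$ from conditions (1) and (2) (when $\dist A\geq 2$ every leaf has a noncyclic successor, hence is the unique predecessor of its successor, and Lemma \ref{lem:domdim3} applies) before the reduction may be invoked; and your assertion that a cyclic vertex with two predecessors ``becomes a cyclic vertex of $R(\psi(A))$ with a single predecessor'' is not correct in general---condition (3) must be allowed to persist with two predecessors at every level of the tower, as the example with Kupisch series $(6,7,6,7)$ sitting inside a higher tower shows.
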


Throughout this paper, $k$ is a fixed algebraically closed field.
All algebras are finite dimensional associative algebras over $k$.
All modules are finite dimensional left modules unless otherwise stated.
We refer to \cite{ARS95} for basics on finite dimensional algebras and their representations.

The  paper is organized as follows.
In Section 2, we recall some basic properties of Nakayama algebras and their resolution quivers.
In Section 3, we investigate the syzygy filtered algebra for all Nakayama algebras.
Then we prove the main result in Section 4.
Some applications and examples are  given in Section 5.

\subsection*{Acknowledgements}
The author is grateful to Professor Shijie Zhu for his comments and suggestions.
This work is supported by the National Natural Science Foundation of China 
(No. 12571036 and 11801141).

\section{Preliminary}

Given an integer  $n$, we denote by $\Delta_n$ the quiver on $n$ vertices containing a single 
directed cycle through all vertices; see Figure \ref{fig:cycle}.
Denote by $k\Delta_n$  the path algebra of  $\Delta_n$.

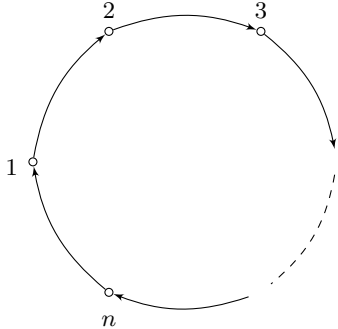
\begin{figure}
\begin{tikzpicture}
\node[vertex, label={[xshift=-0.8em, yshift=-1em] \small $1$}] (1) at  (-2,0) {};
\node[vertex, label={\small $2$}] (2) at  (-1,1.73) {};
\node[vertex, label={\small $3$}] (3) at  (1,1.73) {};
\node[shape=circle] (4) at (2,0){};
\node[shape=circle] (5) at (1,-1.73){};
\node[vertex, label={[yshift=-1.8em]\small $n$}] (6) at (-1,-1.73) {};
\draw[edge] (1) to[bend left=20] (2);
\draw[edge] (2) to[bend left=20] (3);
\draw[edge] (3) to[bend left=20] (4);
\draw[dashed] (4) to[bend left=20] (5) ;
\draw[edge] (5) to[bend left=20] (6);
\draw[edge] (6) to[bend left=20] (1);
\end{tikzpicture}
    \caption{The quiver $\Delta_n$.}
    \label{fig:cycle}
\end{figure}

Following \cite{HI20}, an  algebra $A$ is said to be 
a Nakayama algebra of rank $n$ if 
it is isomorphic to the quotient algebra of $k\Delta_n$ 
via the ideal generated by a finite set $I$ of nontrivial  paths.
If $I$ consists of paths of length  at least $2$,
then $A$ is a cyclic Nakayama algebra.
If there exists a unique arrow in $I$,
then $A$ is a linear Nakayama algebra.
Both cyclic and linear Nakayama algebras are connected.
We allow more than one arrow in $I$.

Let $A$ be a Nakayama algebra of rank $n$.
For each vertex $a$ in $\Delta_n$, denote by $S(a)$ the simple $A$-module, 
by $P(a)$ the projective $A$-module, and by $I(a)$ the injective $A$-module at $a$.
Set $\tau(a) = b$ if there is an arrow from  $a$ to $b$ in $\Delta_n$.
This induces a translation $\tau$ on the simple $A$-modules. 
If $A$ is a cyclic Nakayama algebra, 
then $\tau$ coincides with the Auslander-Reiten translation \cite{ARS95}.

For each vertex $a$ in $\Delta_n$, 
denote by $c_a$  the composition length of $P(a)$.
The series 
\[c(A)=(c_1,c_{2},\cdots, c_{n})\] 
is called a Kupisch series of $A$.
Two Nakayama algebras are isomorphic if and only if 
their Kupisch series are equal up to cyclic permutation.

Following \cite{Rin13}, the resolution quiver $R(A)$ of $A$ 
is the successor graph of the function 
\[\gamma(S)=\tau\soc P(S)\]
on the set of all isoclasses of simple $A$-modules; see also \cite{Gus85}.
A simple $A$-module $S$ is said to be red if the 
projective dimension of $S$ is  equal to $1$, and black otherwise.
A projective $A$-module $P$ is said to be minimal if 
its top module is a direct sum of black simple $A$-modules.

The following is well known.

\begin{lem} \label{lem:black}
Let $S$ be a simple $A$-module. Then
\begin{enumerate}
   \item  $P(S)$ is injective if and only if  $\tau^{-1}S$ is black;
   \item  $\ell(P(S))=\ell(P(\tau S))+1$ if and only if $S$ is red.
\end{enumerate}
\end{lem}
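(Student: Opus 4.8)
For part (1), the plan is to use the structure of projective modules over a Nakayama algebra. Recall that $P(S(a))$ has a unique composition series with top $S(a)$ and socle $S(\tau^{c_a-1}a)$, and that $P(S(a))$ is injective if and only if it equals $I(\soc P(S(a)))$. So I would translate "$P(S)$ injective" into a statement comparing $c_a$ (the length of the projective at $a$) with the length of the injective at the socle vertex. More precisely, write $S=S(a)$ and let $b=\tau^{c_a-1}a$ so that $\soc P(S)=S(b)$; then $P(S)$ is injective exactly when $c_b' = c_a$, where $c_b'$ denotes the length of $I(b)$, equivalently when the "injective Kupisch data" at $b$ matches. I would recast this in terms of the arrow $a\to a'$ in $\Delta_n$ with $S(a')=\tau^{-1}S$: the claim is that this matching happens if and only if $\operatorname{pd} S(a')\neq 1$. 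The key computation here is the standard syzygy formula: $\Omega S(a') = \rad P(a') = $ the module of length $c_{a'}-1$ starting at the successor of $a'$, and $\operatorname{pd} S(a')=1$ precisely when this syzygy is projective, i.e.\ when $c_{a'}-1 = c_{\tau^{-1}(\text{that vertex})}$ in the appropriate sense — I would chase the indices carefully and observe this is the negation of the injectivity condition on $P(S(a))$. This is essentially a bookkeeping argument with Kupisch series, and I expect it to be routine once the indices are set up correctly; I would cite \cite{Gus85} or \cite{She17} if a ready-made statement is available.

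For part (2), suppose $S$ is red, so $\operatorname{pd} S = 1$, meaning $\Omega S = \rad P(S)$ is projective. I want to show $\gamma(\tau S)=\gamma(S)$, i.e.\ $\tau\soc P(\tau S)=\tau\soc P(S)$, equivalently $\soc P(\tau S)=\soc P(S)$. Writing $S=S(a)$ with successor vertex $a^+$ (so $\tau S = S(a^+)$), the module $\rad P(a)$ has top $S(a^+)$ and the same socle as $P(a)$; since $\rad P(a)$ is projective, it must equal $P(a^+)$. Therefore $P(\tau S)=P(S(a^+))=\rad P(S)$, which has socle equal to $\soc P(S)$. Applying $\tau$ gives $\gamma(\tau S)=\gamma(S)$, as desired. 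This direction is short and clean.

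The main obstacle, such as it is, lies entirely in part (1): getting the index arithmetic in the Kupisch series to line up so that "$P(S)$ injective" and "$\operatorname{pd}\tau^{-1}S \neq 1$" are manifestly equivalent rather than merely plausibly related. The subtlety is that the injectivity of $P(a)$ is a condition "read off from the socle going forward," while the projective dimension of $\tau^{-1}S$ is a condition "read off from $\rad P$ going forward from $\tau^{-1}S$," and these two vantage points must be reconciled — this is exactly the kind of place where an off-by-one error creeps in. I would handle it by explicitly writing both conditions as equalities among the $c_{\tau^i a}$ and checking they are literally the same equality. Since the lemma is flagged as well known, I would keep this terse and point to the literature for the detailed verification.
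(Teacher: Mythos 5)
The paper itself gives no proof of this lemma (it is quoted as ``well known''), so there is no argument of the author's to compare yours against; I can only judge your proof on its own terms. Your part (2) is complete and correct: since $S$ is red, $\rad P(S)$ is a nonzero projective uniserial module with top $\tau S$, hence equals $P(\tau S)$, and because $\rad P(S)$ and $P(S)$ have the same socle you get $\gamma(\tau S)=\tau\soc P(\tau S)=\tau\soc P(S)=\gamma(S)$.

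For part (1) your plan is the right one and it does close, but as written it contains exactly the off-by-one you said you were worried about. Under the paper's convention ($\tau S(a)=S(b)$ when there is an arrow $a\to b$, so that $P(a)$ has composition factors $S(a),S(\tau a),\dots,S(\tau^{c_a-1}a)$ from top to socle), the simple $\tau^{-1}S(a)$ is $S(a')$ for the arrow $a'\to a$, not for the arrow $a\to a'$ as you set it up. With your identification the statement you would be verifying, ``$P(a)$ injective iff $\tau S(a)$ is black,'' is false: for the Kupisch series $(3,2,2)$ used in the paper's first example, $P(2)$ embeds properly into $P(1)$ and is not injective, yet $\tau S(2)=S(3)$ is black. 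With the correct identification the bookkeeping works exactly as you predict: $P(a)$ is injective iff there is no uniserial module of length $c_a+1$ with socle $\soc P(a)$, i.e.\ iff $c_{a'}\le c_a$; while $S(a')$ is red iff $\rad P(a')$ is nonzero projective, i.e.\ iff $c_{a'}\ge 2$ and $c_a=c_{a'}-1$. Since every Kupisch series satisfies $c_a\ge c_{a'}-1$, these two conditions are complementary, which is (1). (Note also that $\mathrm{pd}=0$ counts as black; this causes no trouble, since $c_{a'}=1$ forces $c_{a'}\le c_a$.) So the only genuine defect is the misplaced $\tau$, which would have surfaced and been fixable in the index chase you deferred.
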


\begin{cor} \label{cor:two-black}
Let $S$ be a simple $A$-module. Then $S$ and $\tau^{-1}S$ are black 
if and only if $\gamma^{-1}(\gamma S)=\{S\}$.
\end{cor}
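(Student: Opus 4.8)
The plan is to prove the two implications separately. For ``$\gamma^{-1}(\gamma S)=\{S\}\Rightarrow S\text{ and }\tau^{-1}S\text{ are black}$'' I argue by contraposition. The corollary is trivial when $A$ has rank one, so I assume the rank is at least $2$; then $\tau S\neq S$. If $S$ is red, Lemma~\ref{lem:black}(2) gives $\gamma(\tau S)=\gamma(S)$, so $\tau S\in\gamma^{-1}(\gamma S)\setminus\{S\}$. If instead $\tau^{-1}S$ is red, applying Lemma~\ref{lem:black}(2) to $\tau^{-1}S$ gives $\gamma(S)=\gamma(\tau^{-1}S)$, so again $\gamma^{-1}(\gamma S)\supsetneq\{S\}$. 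Either way $\gamma^{-1}(\gamma S)\neq\{S\}$.

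For the converse I pass to the cyclic numbering $S(0),\dots,S(n-1)$ of the simple modules, in which $\tau S(i)=S(i+1)$ (indices modulo $n$), the Kupisch series is $(c_0,\dots,c_{n-1})$, one has $\gamma S(i)=\tau\soc P(i)=S(i+c_i)$, and $S(i)$ is red precisely when $c_{i+1}=c_i-1$ (equivalently, when $\rad P(i)\cong P(i+1)$). The Kupisch inequality $c_{i+1}\geq c_i-1$ says that the integers $i+c_i$ are weakly increasing around the cycle and increase by exactly $n$ over one full turn; feeding this into the condition $i+c_i\equiv j+c_j\pmod n$ and distinguishing whether the difference equals $0$ or $n$ shows that every nonempty fibre of $\gamma$ is a cyclic arc $S(a),S(a+1),\dots,S(a+\ell)$ along which the first $\ell$ vertices $S(a),\dots,S(a+\ell-1)$ are all red. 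Hence if $\gamma^{-1}(\gamma S)\neq\{S\}$ with $S=S(i)$, then $S(i)$ sits in such an arc with $\ell\geq 1$: either $S(i)$ is not its terminal vertex and so is red, or it is, in which case $\tau^{-1}S(i)=S(i-1)$ is a red vertex of the arc. In both cases $S$ or $\tau^{-1}S$ is red, which is the contrapositive of what remained to be shown.

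I expect the main obstacle to be the ``fibres are cyclic arcs'' claim, where the wrap-around of $\Delta_n$ contributes the extra case in the argument; every other ingredient is either Lemma~\ref{lem:black} itself or the routine description of $\gamma$ and of the red vertices via the Kupisch series. One could instead attempt a homological proof based on Lemma~\ref{lem:black}(1): since $\tau^{-1}S$ black forces $P(S)$ injective, any projective cover $P(T)\neq P(S)$ with $\soc P(T)=\soc P(S)$ embeds into $P(S)=I(\soc P(S))$ and hence equals $\rad^{k}P(S)$ for some $k\geq 1$; but deciding which such truncations can be projective, even with the extra information that $P(\tau S)$ is injective because $S$ is black, seems more cumbersome than the arc argument, so I would keep the combinatorial route.
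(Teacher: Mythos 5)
Your proof is correct. The paper gives no argument for this corollary at all---it is stated as an immediate consequence of Lemma~\ref{lem:black}---so there is nothing to compare line by line; but it is worth noting where your work actually goes beyond that lemma. The direction ``$\gamma^{-1}(\gamma S)=\{S\}$ implies both black'' is exactly the intended use of Lemma~\ref{lem:black}(2), applied once to $S$ and once to $\tau^{-1}S$, and your rank-one caveat is the right (and easily dispatched) degenerate case. The converse direction is \emph{not} a formal consequence of Lemma~\ref{lem:black}: one needs to know that any two vertices in the same fibre of $\gamma$ are linked through a chain of red vertices, i.e.\ that each nonempty fibre is a cyclic arc whose non-terminal vertices are red. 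The paper implicitly leans on the equivalent statement from the literature (cited later as \cite[A.6]{Rin21}, that $\gamma$ restricts to a bijection from black vertices to non-leaf vertices, so each fibre contains a unique black vertex, namely its last element), whereas you derive it directly from the Kupisch data: $\gamma S(i)=S(i+c_i)$, the function $i\mapsto i+c_i$ is weakly increasing with total increase $n$ per turn, level sets of $v$ and $v+n$ are translates by $n$ and hence reduce to the same cyclic arc, and $f(i+1)=f(i)$ is exactly the redness of $S(i)$. That computation is sound (your handling of the wrap-around case, where the difference is $n$ rather than $0$, is the one place an error could hide, and it checks out), and it makes the corollary self-contained rather than dependent on an external reference.
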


We need the following; see \cite[1.1]{She17} and \cite[3.10]{Rin21}.

\begin{lem}\label{lem:finite-gld}
    Let $A$ be a Nakayama  algebra. Then
    \begin{enumerate} 
    \item $A$ has finite global dimension if and only if $R(A)$ is connected of weight $1$;
    \item $2\dist A-1\leq \mathrm{gl}.\dim A\leq 2\dist A$, provided that $\mathrm{gl}.\dim A$ is finite.
    \end{enumerate}
\end{lem}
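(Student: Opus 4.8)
Both parts are in the literature---statement~(1) is \cite[Theorem~1.1]{She17} (in the spirit of Gustafson's criterion \cite{Gus85}) and statement~(2) is \cite[3.10]{Rin21}---so I will only outline the route I would take. Everything hinges on a single syzygy computation, which I would carry out through the standard combinatorial model of the uniserial $A$-modules: the uniserial module with top $S(a)$ and composition length $\ell$ is recorded by the interval $[a,a+\ell-1]$ of $\bbZ/n\bbZ$, so that $S(a)=[a,a]$, $P(a)=[a,a+c_a-1]$, and $[a,a+\ell-1]$ is projective exactly when $\ell=c_a$. Reading off a projective cover then yields: for $M=[a,a+\ell-1]$ with $\ell<c_a$ one has $\Omega M=[a+\ell,a+c_a-1]$, and if moreover $\Omega^2M\neq 0$ then $\Omega^2M$ is again uniserial, with top $S(a+c_a)=\tau\soc P(a)=\gamma(\top M)$. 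In particular, for a simple module $S$ one has $\Omega^2S=0$ precisely when $S$ is projective or red, and otherwise $\Omega^{2i}S$, so long as it is nonzero, is uniserial with top $\gamma^i(S)$. Thus the even syzygies of a simple run along the unique directed walk issuing from it in $R(A)$.

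For part~(1): $\mathrm{gl}.\dim A<\infty$ if and only if every simple $A$-module has finite projective dimension, i.e.\ for every simple $S$ some even syzygy $\Omega^{2i}S$ is projective or zero. Since each vertex of $R(A)$ has out-degree one, the walk from $S$ eventually enters the unique cycle $C$ of its connected component, so the question reduces to the behaviour of the syzygy modules once their tops circulate around $C$. Feeding the length formula above around $C$, one checks that these uniserial modules eventually reach length $c_a$ at their top $a$---hence become projective and the resolution stops---precisely when the weight of $C$ equals $1$, whereas when the weight is larger the top never catches up with $c_a$ and the resolution runs forever. The hypothesis that $R(A)$ be connected is what makes this hold for all simples at once: it excludes configurations such as the self-injective Nakayama algebras, where every cycle still has weight $1$ but $R(A)$ is disconnected and $\mathrm{gl}.\dim A=\infty$. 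I would refer to \cite{She17} for this bookkeeping.

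For part~(2), assume $\mathrm{gl}.\dim A<\infty$, so that $R(A)$ is connected of weight $1$ and $\dist A$ is finite. Given a simple $S$, run its minimal projective resolution: after $\dist S\le\dist A$ rounds of $\Omega^2$ the successive top reaches $C$, and the weight-one analysis of part~(1) shows the resolution then terminates shortly afterwards; pinning down exactly how soon---and isolating the red and projective simples, where $\Omega^2$ vanishes immediately and where the arrow of $R(A)$ must be read with care via Lemma~\ref{lem:black} and Corollary~\ref{cor:two-black}---yields $\mathrm{pd}\,S\le 2\dist A$, hence $\mathrm{gl}.\dim A\le 2\dist A$. For the lower bound one exhibits a simple with resolution of length at least $2\dist A-1$; the natural candidate is a simple $S$ with $\dist S=\dist A$, which by maximality of $\dist$ has no predecessor in $R(A)$, and one tracks $\Omega^0S,\Omega^2S,\dots$ to see that the resolution does not terminate before the successive tops have travelled the full distance $\dist A$ down to $C$, so that $\mathrm{pd}\,S\ge 2\dist A-1$ (with a detour through a suitable neighbour when $S$ itself is red or projective). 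The delicate point throughout is precisely this length bookkeeping---the exact round in which a syzygy module becomes projective, especially near a leaf---which is why the detailed arguments of \cite{She17} and \cite{Rin21} are needed, and which I would not attempt to reproduce in full here.
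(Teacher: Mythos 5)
The paper offers no proof of this lemma at all---it simply quotes the results from \cite[1.1]{She17} and \cite[3.10]{Rin21}---and your proposal cites exactly the same sources, so in substance the two take the same route. The syzygy bookkeeping you sketch on top of the citations (the interval model $\Omega^{2}[a,a+\ell-1]$ having top $\gamma(S(a))=S(a+c_a)$, the role of weight $1$ plus connectedness, and the care needed for red/projective simples and for leaves in the distance bounds) is consistent with the arguments in those references, and you correctly flag the delicate length-counting as the part you are deferring rather than proving.
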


An algebra $A$ is said to be Gorenstein if its left selfinjective dimension 
and right selfinjective dimension  are finite.
If $A$ is a Gorenstein algebra, then its left and right selfinjective dimensions 
are equal; see \cite{Zak69}.   
It is known that a Nakayama algebra is Gorenstein if and only if 
its left selfinjective dimension is finite.

We need the following; see \cite[1.2]{She17} and \cite[4.13]{Sen19}.

\begin{lem}\label{lem:finite-vd}
Let $A$ be a Nakayama algebra of  infinite global dimension. Then
    \begin{enumerate}
        \item $A$ is Gorenstein if and only if all cyclic vertices in $R(A)$ are black;
        \item $\mathrm{id}_A\,A=2\dist A$, provided that $\mathrm{id}_A\,A$ is finite. 
    \end{enumerate} 
\end{lem}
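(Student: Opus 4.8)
The plan is to reduce both statements to bookkeeping with (co)syzygies of simple, projective and injective modules along the map $\gamma$. The structural input that does the real work is the \emph{syzygy formula} for a Nakayama algebra: the syzygy of a simple module $S$ is the uniserial module $\Omega S=\rad P(S)$, and computing the projective cover of a uniserial module shows that $\Omega^2 S$ is again a radical layer of $P(\gamma S)$ whenever $S$ is black, while $\operatorname{pd}S=1$ precisely when $S$ is red; this is due to Ringel \cite{Rin13} (see also \cite{She17}). Dually, the cotruncations of the uniserial injectives are governed by a coresolution analogue $\gamma^{\ast}$ of $\gamma$, and I would pin down that for a Nakayama algebra the coresolution quiver is $R(A)$ up to a relabelling of the vertices by a power of $\tau$; in particular it has the same cycles, the same colours and the same distance function as $R(A)$. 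Equivalently one applies the duality $D$ and works in $\mod A^{\mathrm{op}}$, identifying its resolution quiver with the coresolution quiver of $A$, and uses $\mathrm{id}_A\,A=\max_a\operatorname{pd}_{A^{\mathrm{op}}}D(P(a))$.

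For part (1): since $A$ has infinite global dimension, Lemma \ref{lem:finite-gld}(1) guarantees that $R(A)$ contains a cycle. By Corollary \ref{cor:two-black}, if every vertex of a cycle $C$ is black then no tree attaches to $C$, so $C$ is a whole connected component consisting of $\Omega$-periodic simples; a red vertex on a cycle is, by Lemma \ref{lem:black}(2), exactly what destroys injectivity of $\gamma$ on that component. Now write $\mathrm{id}_A\,A=\max_a\mathrm{id}_A\,P(a)$. By Lemma \ref{lem:black}(1) the projective $P(a)$ fails to be injective precisely when $S(\tau^{-1}a)$ is red; running its injective coresolution, each double cosyzygy $\Omega^{-2}$ moves the associated vertex one $\gamma^{\ast}$-step towards a cycle. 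If every cycle is black, the chain reaches a cycle and there stabilises at an injective (again by the dual of Lemma \ref{lem:black}(1) / Corollary \ref{cor:two-black}), so $\mathrm{id}_A\,P(a)<\infty$ for all $a$ and $A$ is Gorenstein. If some cycle carries a red vertex, the chain can circle that cycle indefinitely, producing a projective of infinite injective dimension, so $A$ is not Gorenstein.

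For part (2): assume $\mathrm{id}_A\,A<\infty$, so by part (1) every cycle of $R(A)$ is black, and write $\mathrm{id}_A\,A=\max_a\mathrm{id}_A\,P(a)$. For a non-injective $P(a)$ I would show that $\mathrm{id}_A\,P(a)=2d$, where $d$ is the distance in the coresolution quiver from the vertex attached to $P(a)$ to the cycle of its component: running the injective coresolution, each $\Omega^{-2}$ lowers that distance by exactly one, and once it reaches $0$ the coresolution stops at an injective. The point at which blackness of all cycles enters is that there is \emph{no} off-by-one shortening of the sort responsible for the lower bound $2\dist A-1$ in Lemma \ref{lem:finite-gld}(2): no cyclic vertex is red, so the count is exactly $2d$ and never $2d-1$. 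Taking the maximum over $a$, and checking (using that $A$ is Gorenstein) that the maximal such $d$ equals $\dist A$ — the same tree-height that controls the leaves of $R(A)$ — yields $\mathrm{id}_A\,A=2\dist A$.

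The main obstacle is the explicit (co)syzygy bookkeeping that underlies all of the above: one must track the composition lengths of the uniserial syzygies and cotruncations layer by layer so that a double step along a (co)resolution is matched precisely with a single $\gamma$- or $\gamma^{\ast}$-step, carry the red/black case distinction through without losing or gaining a step, and in particular verify that the coresolution quiver coincides with $R(A)$ up to relabelling so that its distance function is literally $\dist$ and its cycle colours agree with those of $R(A)$. Once these facts are in place both statements are formal, which is also why the lemma is merely cited, from \cite{She17} and \cite{Sen19}, rather than reproved here.
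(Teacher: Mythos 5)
The paper itself does not prove this lemma; it is imported verbatim from \cite[1.2]{She17} and \cite[4.13]{Sen19}, so there is no in-paper argument to match yours against. Your overall strategy --- reduce $\mathrm{id}_A\,A$ to $\max_a\mathrm{id}_A\,P(a)$, run minimal injective coresolutions, and match each double cosyzygy with one step in a coresolution quiver --- is indeed the standard route to these statements. But as written the proposal contains one outright false step and defers the genuinely hard point to an assertion that is also false in the form you state it.

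First, the claim that ``by Corollary~\ref{cor:two-black}, if every vertex of a cycle $C$ is black then no tree attaches to $C$'' is wrong. Corollary~\ref{cor:two-black} requires \emph{both} $S$ and $\tau^{-1}S$ to be black to conclude $\gamma^{-1}(\gamma S)=\{S\}$, and for a cyclic vertex $S$ the module $\tau^{-1}S$ need not be cyclic, hence need not be black. The paper's own example with Kupisch series $(6,7,6,7)$ refutes your inference: the cycle $\{1,3\}$ is entirely black, yet the red vertices $2$ and $4$ attach to it ($\gamma^{-1}(S(1))=\{S(3),S(2)\}$). This is exactly why Theorems~\ref{mainthm:A} and~\ref{mainthm:B} must impose conditions on $\tau^{-1}$ of cyclic vertices separately from blackness of the cycle. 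Second, and more seriously, the bridge between the coresolution quiver (which is what your cosyzygy bookkeeping naturally sees) and $R(A)$ (in which the lemma's hypotheses and the function $\dist$ are formulated) is the real content of the lemma, and you dispose of it by asserting that the coresolution quiver ``is $R(A)$ up to a relabelling of the vertices by a power of $\tau$,'' with the same colours and distance function. Already for the Kupisch series $(3,2,2)$ the required relabelling ($1\leftrightarrow 3$, $2\mapsto 2$) is not a power of $\tau$; and in general the agreement of cycles, colours and tree-heights between $R(A)$ and $R(A^{\mathrm{op}})$ is a theorem (this is where \cite{She17} does its work), not a formality. Without it, your argument for (1) characterises Gorensteinness by blackness of cycles in the \emph{coresolution} quiver, and your argument for (2) yields $2\max_a d^{*}(a)$ for the coresolution distance $d^{*}$ rather than $2\dist A$. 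The same applies to the ``no off-by-one shortening'' claim distinguishing $2\dist A$ from the $2\dist A-1$ that can occur in Lemma~\ref{lem:finite-gld}(2): you correctly identify where the parity must be controlled, but the control itself is exactly the layer-by-layer length computation you postpone, so the proposal is a plan rather than a proof.
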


Let $A$ be a Nakayama algebra.
A simple $A$-module $S$ is said to be a leaf if there is no arrow in $R(A)$ ending at $S$.
We need the following; see \cite[3.1]{Mad05}.

\begin{lem} \label{lem:leaf}
Let $S$ be a simple $A$-module. Then
\begin{enumerate}
   \item  $I(S)$ is projective if and only if $\tau S$ is a non-leaf in $R(A)$;
   \item  $\mathrm{id}_A\,S=1$  if and only if $S$ is a leaf in $R(A)$.
\end{enumerate}
\end{lem}

\section{Syzygy filtered algebra}
In this section, we study the syzygy filtered algebra for all Nakayama algebras.
This generalizes  Sen's construction for cyclic Nakayama algebras.

Let $A$ be a Nakayama algebra and $R(A)$ its resolution quiver.
By \cite[A.6]{Rin21} the map $\gamma$  induces a one-to-one correspondence between 
the set of all black vertices and the set of all non-leaf vertices in $R(A)$.
For any non-leaf vertex $S$ in $R(A)$, let $T$ be the unique black vertex such that $\gamma(T)=S$.
Consider the  exact sequence 
\begin{equation}\label{eq:elementary}
0 \to \Delta(S) \to  P(\tau T)\overset\alpha\to  P(T) \to  T\to 0,
\end{equation}
where $\alpha\in A$ is the arrow starting at  $T$.
The kernel $\Delta(S)$ is called a base module. 
It  is the quotient module of $P(S)$ of minimal length
such that the translation of its socle is a non-leaf vertex.

For any module $M$, denote by $\Omega(M)$ the syzygy module of $M$.
Then we have
\begin{equation} \label{eq:delta}
\Delta(S)=\begin{cases}
    P(S), &\text{if}\;\mathrm{pd}_A\,T\in\{0,2\},\\
   \Omega^2(T),&\text{otherwise}.
\end{cases}
\end{equation}
Following \cite{Sen19,Sen21,Rin21}, the base set 
\[\scB=\{\Delta(S)\mid \text{$S$ is a non-leaf vertex in $R(A)$}\}\] 
form a semibrick in the category $\mod A$ of all finite dimensional $A$-modules. 
The filtration category $\scF$ of $\scB$ is an exact Abelian subcategory of $\mod A$. 
The category $\scF$ contains the projective cover  of every base module 
and the second syzygy of every finite dimensional $A$-module.

Denote by $\scP$ the projective covers of all base modules.
The syzygy filtered algebra $\varepsilon(A)$ of $A$ is the endomorphism algebra  of $\scP$, that is,
\[\varepsilon(A)=\End_A(\bigoplus\nolimits_{P\in \scP}P).\]
It is a Nakayama algebra of rank less than or equal to the rank of $A$. 
The equality holds if and only if $A$ is selfinjective.
The category  $\mod \varepsilon(A)$  of all finite dimensional 
$\varepsilon(A)$-modules is equivalent to  $\scF$. 
The syzygy filtered algebra $\varepsilon(A)$ has finite global dimension 
if and only if $A$ has finite global dimension.

Given  a Nakayama algebra $A$, denote by $\chi(A)$ the number of isoclasses of simple projective $A$-modules.
Note that $\chi(A)$ equals the number of occurrences of $1$ in the Kupisch series of $A$.
For any integer $i\geq 0$, 
denote by $\varepsilon^i(A)$ the $i$-th iterated syzygy filtered algebra of $A$.
By \eqref{eq:delta}, we have the following.

\begin{lem}
Let $A$ be a Nakayama algebra and $i\geq 1$. 
Then $\chi(\varepsilon^i{A})-\chi(\varepsilon^{i-1}A)$ 
is equal to the number of isoclasses of simple $A$-modules of projective dimension $2i$. 
\end{lem}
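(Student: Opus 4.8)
The plan is to determine which simple $\varepsilon(A)$-modules are projective and then to count them. Since $\mod\varepsilon(A)$ is equivalent to the filtration category $\scF$ of $\scB$, under this equivalence the simple $\varepsilon(A)$-modules correspond bijectively to the base modules $\Delta(S)$, with $S$ ranging over the non-leaf vertices of $R(A)$, while the indecomposable projective $\varepsilon(A)$-modules correspond to the modules of $\scP$, that is, to the projective covers in $\mod A$ of the base modules. As $\Delta(S)$ is a nonzero quotient of the uniserial projective $P(S)$, we have $\top\Delta(S)=S$, so the projective cover of $\Delta(S)$ in $\mod A$ is $P(S)$; hence $\Delta(S)$ belongs to $\scP$ if and only if $\Delta(S)=P(S)$, i.e.\ if and only if the base module $\Delta(S)$ is projective over $A$. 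Therefore $\chi(\varepsilon(A))$ equals the number of non-leaf vertices $S$ of $R(A)$ for which $\Delta(S)$ is projective.

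Next I would use \eqref{eq:delta} to decide when $\Delta(S)$ is projective. Let $T$ be the unique black vertex with $\gamma(T)=S$. If $\mathrm{pd}_A T\in\{0,2\}$, then $\Delta(S)=P(S)$ is projective. Otherwise $\Delta(S)=\Omega^2(T)$, and since $T$ is black we have $\mathrm{pd}_A T\neq 1$, hence $\mathrm{pd}_A T\notin\{0,1,2\}$; then $\Omega^2(T)$ is a nonzero module of positive (possibly infinite) projective dimension, so it is not projective. Thus $\Delta(S)$ is projective exactly when $\mathrm{pd}_A T\in\{0,2\}$.

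Finally I would transport the count along $\gamma$. By the correspondence recalled from \cite[A.6]{Rin21}, $\gamma$ restricts to a bijection from the black vertices of $R(A)$ onto the non-leaf vertices, so
\[
\chi(\varepsilon(A))=\#\{\,T\text{ black vertex of }R(A)\mid \mathrm{pd}_A T\in\{0,2\}\,\}.
\]
A simple $A$-module of projective dimension $0$ or $2$ has projective dimension different from $1$ and is therefore black, so this number equals $\#\{\,T\mid \mathrm{pd}_A T=0\,\}+\#\{\,T\mid \mathrm{pd}_A T=2\,\}$. The first term is $\chi(A)$ by definition, and the second is the number of isoclasses of simple $A$-modules of projective dimension $2$; subtracting $\chi(A)$ gives the claim. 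The only delicate step is the first paragraph — making precise, via the description of $\scP$ as the set of projective covers of the base modules together with $\top\Delta(S)=S$, that a simple projective $\varepsilon(A)$-module corresponds precisely to a base module that is projective as an $A$-module; everything after that is bookkeeping with \eqref{eq:delta} and the black/non-leaf bijection.
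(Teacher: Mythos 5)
Your proof is correct and follows essentially the route the paper intends: the paper derives the lemma directly from \eqref{eq:delta}, i.e.\ from the observation that the simple projective objects of $\scF$ are exactly the base modules $\Delta(S)$ with $\Delta(S)=P(S)$, which via the bijection between black and non-leaf vertices are counted by the simples $T$ with $\mathrm{pd}_A T\in\{0,2\}$. You have merely written out the details (top of $\Delta(S)$, non-projectivity of $\Omega^2(T)$ when $\mathrm{pd}_A T\geq 3$) that the paper leaves implicit.
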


For any  $i\geq 1$, the rank of $\varepsilon^i(A)$ is 
no more than the rank of $\varepsilon^{i-1}(A)$.
There is an integer $m$ such that  $\varepsilon^{m}(A)$ is selfinjective.
Then $\chi(\varepsilon^{m}{A})$ is equal to the number of isoclasses 
of simple $A$-modules of even projective dimension.
This gives an alternative proof of Madsen's criterion \cite{Mad05}, which states that
a Nakayama algebra $A$ has finite global dimension if and only if 
there is a simple $A$-module of even projective dimension.

Following \cite{MMG20}, the defect $\defect A$ of an algebra $A$ is 
the number of isoclasses of indecomposable injective non-projective $A$-modules. 
If $A$ is a Nakayama algebra, 
the defect of $A$ is equal to the number of red vertices in the resolution quiver of $A$. 
By \eqref{eq:delta} we have following.

\begin{lem} \label{lem:defect}
Let $A$ be a Nakayama algebra and $i\geq 0$. 
Then  $\defect\varepsilon^i(A)$ is equal to 
the number of isoclasses of simple $A$-modules of projective dimension $2i+1$. 
\end{lem}

The projective objects in $\scF$ are precisely 
the projective $A$-modules  contained in $\scF$.
The injective objects in $\scF$ are more complicated.
Let us give a description.

Let $S$ be a non-leaf vertex in $R(A)$. By \cite[A.6]{Rin21} 
there is a unique vertex $T$ such that $\tau^{-1}T$ is black and $\gamma(T)=S$.
Consider the exact sequence
\begin{equation} \label{eq:inj}
 0\to J(\tau^{-1}S) \to P(T) \overset{w}\to   P(I(\tau^{-1}T)) \to  I(\tau^{-1}T)\to 0,
\end{equation}
where $w\in A$ is the concatenation of the maximal nonzero path ending at $\tau^{-1}T$ 
and the arrow starting at $\tau^{-1}T$.

We have the following; compare \cite[3.18]{Sen19}.

\begin{lem} \label{lem:inj}
The assignment $S\mapsto J(\tau^{-1}S)$ gives a bijection between
the isoclasses of simple $A$-modules of injective dimension not equal to $1$ 
and the isoclasses of indecomposable injective objects in $\scF$.
It restricts to a bijection between
the isoclasses of simple $A$-modules of injective dimension $3$ 
and the isoclasses of indecomposable injective non-projective objects in $\scF$.
\end{lem}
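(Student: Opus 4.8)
The plan is to analyze the exact sequence \eqref{eq:inj} and read off the injective objects in $\scF$ from it, parallel to how the projective objects were obtained from \eqref{eq:elementary} and \eqref{eq:delta}. First I would recall that $\mod\varepsilon(A)\simeq\scF$ as exact abelian categories, so counting and identifying indecomposable injective objects in $\scF$ is the same as doing so in $\mod\varepsilon(A)$; since $\varepsilon(A)$ is again a Nakayama algebra, its indecomposable injectives are indexed by its simple modules, hence by the base modules $\Delta(S)$, equivalently by the non-leaf vertices $S$ of $R(A)$. So I expect the correspondence to be set up via non-leaf vertices $S\mapsto$ (the injective envelope in $\scF$ of $\Delta(S)$), and the content of the lemma is that this injective envelope is exactly $J(\tau^{-1}S)$.

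The key computational step is to verify that in \eqref{eq:inj} the module $J(\tau^{-1}S)$ is the second syzygy (over $A$) of $I(\tau^{-1}T)$, hence lies in $\scF$ (by the stated property that $\scF$ contains all second syzygies), and then to check that $J(\tau^{-1}S)$ is injective as an object of $\scF$. For the latter I would use that $P(T)$ is injective over $A$ together with the exactness of \eqref{eq:inj}: truncating, we get a short exact sequence $0\to J(\tau^{-1}S)\to P(T)\to \Im f'\to 0$ where $\Im f'$ is again something in $\scF$ (it is a syzygy-type module), and since $P(T)$ is an injective $A$-module contained in $\scF$ it is injective in $\scF$; a standard argument then promotes $J(\tau^{-1}S)$ to an injective object of $\scF$ once one knows $P(T)/J(\tau^{-1}S)$ is $\scF$-filtered, using that $\scF$ is closed under the relevant extensions. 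I would also need that distinct non-leaf vertices $S$ give non-isomorphic $J(\tau^{-1}S)$ — this follows because $\gamma$ restricted to $\{T : P(T)\text{ injective}\}$ is a bijection onto the non-leaf vertices, so $S$ determines $T$ and hence $\tau^{-1}S$.

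For the refinement, I would identify which of the $J(\tau^{-1}S)$ are projective in $\scF$, i.e. projective as $A$-modules. Examining \eqref{eq:inj}: $J(\tau^{-1}S)=\Omega^2_A(I(\tau^{-1}T))$, and this is a projective $A$-module precisely when $\mathrm{pd}_A I(\tau^{-1}T)\le 2$, which by Lemma \ref{lem:black}(1) and the formula \eqref{eq:delta} translates into a statement about the colour/projective dimension data at the corresponding vertex; unwinding this, $J(\tau^{-1}S)$ fails to be projective exactly when $\mathrm{id}_A\,(\tau^{-1}S)=3$ (the dual count to Lemma \ref{lem:defect}, using that $\varepsilon(A)$ shifts injective dimensions down by $2$ just as it does projective dimensions). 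So the non-projective injective objects in $\scF$ correspond to simples of injective dimension exactly $3$. The main obstacle I anticipate is the bookkeeping in the second step: showing $J(\tau^{-1}S)$ is genuinely \emph{injective} in the exact category $\scF$ (not merely that it has no self-extensions), which requires a careful dualization of Ringel's argument in \cite[A.6]{Rin21} and care with the fact that injectives in $\scF$ need not be injective $A$-modules, unlike the projectives. I would handle this by transporting the whole question through the equivalence $\scF\simeq\mod\varepsilon(A)$ and matching $J(\tau^{-1}S)$ with the indecomposable injective $\varepsilon(A)$-module at the simple $\Delta(S)$ by comparing composition series / socles, which sidesteps the need to argue injectivity intrinsically in $\scF$.
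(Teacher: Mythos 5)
Your overall architecture is close to the paper's: both arguments are built on the exact sequence \eqref{eq:inj}, both obtain the count of indecomposable injective objects in $\scF$ from the equivalence $\scF\simeq\mod\varepsilon(A)$ (so that they are indexed by the non-leaf vertices), and both note that non-isomorphic $S$ give non-isomorphic $J(\tau^{-1}S)$. The problem is the step you yourself identify as the main obstacle, namely that $J(\tau^{-1}S)$ is injective in $\scF$: your route (a) inverts the actual mechanism. You propose to conclude injectivity ``once one knows $P(T)/J(\tau^{-1}S)$ is $\scF$-filtered.'' That condition is false in the relevant case and, if it held, would defeat the argument: if $C:=P(T)/J(\tau^{-1}S)$ were a nonzero object of $\scF$, then $0\to J(\tau^{-1}S)\to P(T)\to C\to 0$ would be a conflation in $\scF$, and injectivity of $J(\tau^{-1}S)$ would force it to split, which is impossible since $P(T)$ is indecomposable. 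The paper's proof uses precisely the opposite fact: when $T$ is a leaf, $C=\Ker f$ is a \emph{proper submodule of a base module}, hence \emph{not} an object of $\scF$, and the semibrick property of $\scB$ gives $\Hom_A(Z,\Ker f)=0$ for every $Z\in\scF$ (a nonzero map $\Delta(S'')\to\Ker f\subsetneq\Delta(S')$ would compose to a nonzero non-isomorphism $\Delta(S'')\to\Delta(S')$). This vanishing is what guarantees that any $A$-module extension $Y\to P(T)$ of a map $X\to J(\tau^{-1}S)$ along a mono $X\to Y$ in $\scF$ automatically has image inside $J(\tau^{-1}S)$. Your fallback route (b), matching socles and composition series through the equivalence, could plausibly be completed, but it is not carried out, so the central claim remains unproved as written.

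Two smaller points. The identification $J(\tau^{-1}S)=\Omega^2_A(I(\tau^{-1}T))$ only holds when $T$ is a leaf; when $T$ is a non-leaf vertex the sequence \eqref{eq:inj} splits and $J(\tau^{-1}S)\cong P(T)$ while the minimal second syzygy vanishes, so membership in $\scF$ should in that case be read off from $P(T)\in\scP$. And your refinement lands on $\mathrm{id}_A\,\tau^{-1}S=3$ where the statement requires $\mathrm{id}_A\,S=3$; the paper gets this by observing that $J(\tau^{-1}S)$ is non-projective in $\scF$ if and only if $\Delta(S)$ is a leaf in $R(\varepsilon A)$, which by Lemma \ref{lem:omega} applied to $\varepsilon(A)$ and the degree-two shift corresponds exactly to $\mathrm{id}_A\,S=3$.
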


\begin{proof}
If $T$ is a non-leaf vertex, then $I(\tau^{-1}T)$ is projective and $w$ is zero. 
It follows that $J(\tau^{-1}S)$ is an indecomposable injective $A$-module contained in $\scF$.

If $T$ is a leaf, then $I(\tau^{-1}T)$ is not projective and $w$ is nonzero.
Since $P(T)$ is an injective $A$-module, $J(\tau^{-1}S)$ is an indecomposable object in $\scF$.
Since the image of $w$ is a proper submodule of a base module,
we infer that $J(\tau^{-1}S)$ is an injective object in $\scF$.

If $S$ and $S'$ are not isomorphic, the socle modules of 
$J(\tau^{-1}S)$ and $J(\tau^{-1}S')$ are not isomorphic. 
Then $J(\tau^{-1}S)$ and $J(\tau^{-1}S')$ are  not isomorphic.
Note that the number of isoclasses of indecomposable injective objects in $\scF$ is equal to 
the number of non-leaf vertices in $R(A)$.
Then the assignment is a bijection.

By Lemma \ref{lem:leaf} $J(\tau^{-1}S)$ is a non-projective object in $\scF$ if and only if 
$\Delta(S)$ is a leaf in $R(\varepsilon{A})$. 
This happens if and only if the injective dimension of $S$ is $3$. 
\end{proof}

The translation $\tau$ induces a translation $\tau_\varepsilon$ one the simple objects in $\scF$.
For any vertex $\Delta(S)$ in $R(\varepsilon{A})$, by  \cite[B.4]{Rin21} we have
\[\gamma_\varepsilon(\Delta(S))=\tau_{\varepsilon}\soc_\scF P(\Delta(S))=\Delta(\gamma S).\]
The assignment $\Delta(S)\mapsto S$ yields an injective map of  resolution quivers
\[\varphi\colon R(\varepsilon{A})\to R(A).\]
We denote by $R(A)'$ the image of $\varphi$. 
Then $R(A)'$ is obtained from $R(A)$ by removing all leaf vertices.
Note that $\varphi$ preserves the weight.
However, $\varphi$ may not preserve the vertex colors.

A simple $A$-module $S$ is called $*$-black 
if $\tau^{-1}S$ is black, and $*$-red otherwise.  
For any resolution quiver $R$, denote by $\mathrm{Leaf}\-R,\mathrm{BLeaf}\-R$, and $\mathrm{*BLeaf}\-R$ 
the set of all leaf vertices, black leaf vertices, and $*$-black leaf vertices, respectively.

For each leaf $S$ in $R(A)'$, there is a unique black leaf $T_0$ in $R(A)$ 
such that  $\gamma(T_0)=S$, and a unique $*$-black leaf $T_1$ in $R(A)$ 
such that $\gamma(T_1)=S$.
There is an injective map 
\[\varphi_0\colon\mathrm{Leaf}\-R(\varepsilon{A}) \to \mathrm{BLeaf}\-R(A)\]
sending $\Delta(S)$ to $T_0$ and an injective map
\[\varphi_1\colon\mathrm{Leaf}\-R(\varepsilon{A}) \to \mathrm{*BLeaf}\-R(A)\]
sending $\Delta(S)$ to $T_1$.  
Consider the following two conditions.
\begin{enumerate}
  \item[(B)] For each black leaf in $R(A)$, its successor is a leaf in $R(A)'$.
  \item[(B$^*$)] For each $*$-black leaf in $R(A)$, its successor is a leaf in $R(A)'$.
\end{enumerate}
Then $A$ satisfies (B) if and only if the map $\varphi_0$ is bijective, 
and $A$ satisfies (B$^*$) if and only if the map $\varphi_1$ is bijective.

We have the following.

\begin{prop}
Let $A$ be a Nakayama algebra. 
The following are equivalent.
\begin{enumerate}
    \item $A$ satisfies (B);
    \item $\varphi$ preserves the vertex colors;
    \item the number of red vertices in $R(\varepsilon{A})$ and $R(A)'$ is equal;
    \item any minimal projective object in $\scF$ is a minimal projective $A$-module.
\end{enumerate}
\end{prop}

\begin{proof}
$(1) \iff (3)$
Recall that the number of red vertices is equal to the number of leaf vertices  in any resolution quiver.
We have
\[|\mathrm{Red}\-R(\varepsilon{A})|=|\mathrm{Leaf}\-R(\varepsilon{A})|\leq |\mathrm{BLeaf}\-R(A)|\]
and 
\[|\mathrm{BLeaf}\-R(A)|=|\mathrm{Leaf}\-R(A)|-|\mathrm{RLeaf}\-R(A)=|\mathrm{Red}\-R(A)'|.\]
The equality holds if and only if $\varphi_0$ is a bijection.

$(2)\iff (3)$
Let $P$ be an indecomposable projective object in $\scF$. 
If $P$ is minimal projective in $\mod A$, then $P$ is minimal projective in $\scF$.
This shows that $\varphi$  preserves red vertices.
Then $\varphi$ preserves the vertex colors if and only if 
the number of red vertices in $R(\varepsilon{A})$ and $R(A)'$ is equal.

$(2)\iff (4)$ This is obvious.
\end{proof}

Similarly, we have the following.

\begin{prop} \label{prop:cond-Bstar}
Let $A$ be a Nakayama algebra. The following are equivalent.
\begin{enumerate}
    \item $A$ satisfies (B$^*$);
    \item $\varphi$ preserves the vertex $*$-colors;
    \item the number of $*$-red vertices in $R(\varepsilon{A})$ and $R(A)'$ is equal;
    \item any injective  projective object in $\scF$ is an injective  projective $A$-module.
\end{enumerate}
\end{prop}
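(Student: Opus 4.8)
The plan is to mimic, \emph{mutatis mutandis}, the proof of the analogous Proposition for condition (B), replacing everywhere the role of ``black'' and ``leaf'' by ``$*$-black'' and the appropriate starred notions, and replacing the map $\varphi_0$ by $\varphi_1$. Concretely, I would argue the two cyclic implications $(1)\Leftrightarrow(3)$ and $(2)\Leftrightarrow(3)$ in turn.

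For $(1)\Leftrightarrow(3)$, the key is a counting identity for $*$-red vertices. Recall a vertex $T$ is $*$-red iff $\tau^{-1}T$ is red, and by Lemma~\ref{lem:black}(1) this happens iff $P(T)$ is not injective; equivalently, $T$ is $*$-black iff $P(T)$ is injective. First I would record that the number of $*$-red vertices in any resolution quiver $R$ equals the number of red vertices (both count the non-projective indecomposable injectives, up to the $\tau$/$\tau^{-1}$ bookkeeping), so $|\mathrm{*Red}\mbox{-}R(A)'| = |\mathrm{*Red}\mbox{-}R(A)| - |\mathrm{*RLeaf}\mbox{-}R(A)| = |\mathrm{*BLeaf}\mbox{-}R(A)|$, exactly paralleling the displayed computation in the (B) case. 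On the other side, $|\mathrm{*Red}\mbox{-}R(\varepsilon A)| = |\mathrm{*RLeaf}\mbox{-}R(A)'|$-type reasoning combined with the injectivity of $\varphi_1$ gives $|\mathrm{*Red}\mbox{-}R(\varepsilon A)| = |\mathrm{Leaf}\mbox{-}R(A)'| \le |\mathrm{*BLeaf}\mbox{-}R(A)|$, with equality precisely when $\varphi_1$ is surjective, i.e.\ bijective; and by the paragraph preceding the proposition, $\varphi_1$ is bijective iff $A$ satisfies (B$^*$).

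For $(2)\Leftrightarrow(3)$, I would show $\varphi$ always preserves the $*$-colored vertices coming from $R(\varepsilon A)$ into $R(A)'$ ``one way'': if $\Delta(S)$ is $*$-red in $R(\varepsilon A)$, then $S$ is $*$-red in $R(A)'$. This should follow from tracking the sequence \eqref{eq:inj} and Lemma~\ref{lem:inj}: being $*$-red corresponds, under the translation dictionary, to the relevant projective object failing to be injective in $\scF$, and an object that is projective-but-not-injective in $\scF$ lifts to one that is projective-but-not-injective in $\mod A$. Hence the number of $*$-red vertices of $R(A)'$ is at least that of $R(\varepsilon A)$, and $\varphi$ preserves $*$-colors iff these two numbers coincide, which is $(3)$.

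The main obstacle is the $*$-analogue of the ``one-way'' color-preservation step: in the (B) case one uses that an indecomposable projective object minimal in $\mod A$ stays minimal in $\scF$, which is immediate, but the $*$-colored version asks about injectivity rather than minimality, so I would need Lemma~\ref{lem:inj} (and the explicit sequence \eqref{eq:inj}) to translate ``$P(T)$ injective in $\mod A$'' into the corresponding statement for the object $J(\tau^{-1}S)$, respectively $P(\Delta(S))$, in $\scF$, being careful about the leaf/non-leaf dichotomy for $T$. Once that translation is pinned down the rest is the bookkeeping above, formally identical to the (B) case.
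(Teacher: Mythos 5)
Your proposal is correct and is essentially the proof the paper intends: the paper omits the argument for (B$^*$) entirely (``Similarly, we have the following''), and your counting chain for $(1)\Leftrightarrow(3)$ together with a one-way $*$-color preservation for $(2)\Leftrightarrow(3)$ is the faithful starred analogue of the paper's proof for condition (B). The step you flag as the main obstacle does go through cleanly: since $\scF$ is a full wide subcategory whose monomorphisms are monomorphisms in $\mod A$, a projective $P(S)\in\scP$ that is injective in $\mod A$ is automatically injective in $\scF$, which is exactly the contrapositive of ``$\Delta(S)$ $*$-red implies $S$ $*$-red'' and replaces the minimality argument used in the (B) case.
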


\begin{cor} \label{cor:cond-B}
Let $A$ be a Nakayama algebra satisfying (B). 
Then $A$ is Gorenstein if and only if $\varepsilon(A)$ is Gorenstein.
Furthermore, we have
\begin{equation} \label{eq:injdim}
\mathrm{id}_A\,A=\mathrm{id}_{\varepsilon(A)}\,\varepsilon(A)+2
\end{equation}
provided that $A$ is not selfinjective.
\end{cor}

\begin{proof}
Since $A$ satisfies (B), then $\varphi$ preserves the vertex colors. 
Recall that  $A$ has finite global dimension if and only if 
$\varepsilon(A)$ has  finite global dimension. 
Then by  Lemma \ref{lem:finite-vd} the algebra $A$ is Gorenstein if and only if 
$\varepsilon(A)$ is Gorenstein.

If $\mathrm{id}_A\,A = 1$, then $A$ is hereditary by Lemma \ref{lem:finite-vd}.
There is a black leaf in $R(A)$. This is impossible.
The equation \eqref{eq:injdim} follows from \cite[3.23]{Sen19}.
\end{proof}

\begin{exm}
Let $A$ be a Nakayama algebra with Kupisch series $(3,2,2)$.
The resolution quiver $R(A)$ is the following.
One checks that $A$ satisfies (B).

\[
\begin{tikzpicture}
\node[red-vertex] (1) at  (0,0) {1};
\node[black-vertex] (2) at  (-2,0) {2};
\node[black-vertex] (3) at  (-4,0) {3};
\draw[edge] (3) to (2);
\draw[edge] (2) to (1);
\draw[edge] (1) to[out=30, in=-30, looseness=8] (1);
\end{tikzpicture}
\]

Note that $\Delta(1)=S(1)$ and $\Delta(2)=P(2)$.
The Kupisch series of $\varepsilon(A)$ is $(2,1)$.
The resolution quiver of $\varepsilon(A)$ is the following.

\[
\begin{tikzpicture}
\node[red-vertex] (1) at  (0,0) {1};
\node[black-vertex] (2) at  (-2,0) {2};
\draw[edge] (2) to (1);
\draw[edge] (1) to[out=30, in=-30, looseness=8] (1);
\end{tikzpicture}\]
The colors of $1$ and $2$ are preserved.
\end{exm}

\begin{exm}
Let $A$ be a Nakayama algebra with Kupisch series $(3,2,1)$.
The resolution quiver $R(A)$ is the following.
One sees that $A$ does not satisfy (B).

\[\begin{tikzpicture}
\node[red-vertex] (1) at  (0,0) {1};
\node[red-vertex] (2) at  (-2,0) {2};
\node[black-vertex] (3) at  (2,0) {3};
\draw[edge] (2) to (1);
\draw[edge] (3) to (1);
\draw[edge] (1) to[out=120, in=60, looseness=8] (1);
\end{tikzpicture}\]

We have $\Delta(1)=P(1)$.
The Kupisch series of $\varepsilon(A)$ is $(1)$.
The resolution quiver of $\varepsilon(A)$ is the following.

\[
\begin{tikzpicture}
\node[black-vertex] (1) at  (0,0) {1};
\draw[edge] (1) to[out=120, in=60, looseness=8] (1);
\end{tikzpicture}\]
The color of $1$ is not preserved.
\end{exm}

\begin{exm}
Let $A$ be a Nakayama algebra with Kupisch series $(8,7,7,6)$.
The resolution quiver of $R(A)$ is the following.
It is easy to see that $A$ satisfies (B$^*$).

\[
\begin{tikzpicture}
\node[red-vertex] (1) at  (2,0) {1};
\node[black-vertex] (2) at  (0,0) {2};
\node[red-vertex] (3) at  (-1.73,1) {3};
\node[black-vertex] (4) at  (-1.73,-1) {4};
\draw[edge] (1) to[out=30, in=-30, looseness=8] (1);
\draw[edge] (2) to (1);
\draw[edge] (3) to (2);
\draw[edge] (4) to (2);
\end{tikzpicture}
\]

We have two exact sequences
\[\begin{aligned}
    0\to J(4) \to P(1) \to P(1) \to I(4)\to 0,\\
    0\to J(1) \to P(3) \to P(1) \to I(2)\to 0.
\end{aligned}\]
Note that $J(4)$ is an injective projective $A$-module, 
and $J(1)$ is a non-projective $A$-module.
\end{exm}

\section{Proof of the main result}
In this section, we study the dominant dimension of Nakayama algebras 
by means of resolution quivers.
Then we prove the main result of the present paper.

Let $A$ be an  algebra and $M$ be an $A$-module. 
Take a minimal injective resolution
\[0\to M\to I^0\to I^1 \to \cdots\to I^n \to \cdots\]
of $M$. 
The  dominant dimension $\mathrm{dom}.\dim_AM$ of $M$ is 
the maximal integer $n$ or $\infty$ such that $I^j$ is projective for all $j<n$.
Dually, take a minimal projective resolution
\[\cdots \to P_n \to \cdots P_1 \to P_0 \to M\to 0\]
of $M$. 
The codominant dimension $\mathrm{codom}.\dim_AM$ of $M$ is  
the maximal integer $n$ or $\infty$ such that $P_j$ is injective for all $j<n$.
The left and right dominant dimensions of $A$ are  equal; see \cite{Mul68}.
It is known  that  the dominant dimension of any Nakayama algebra is at least $1$.

Following \cite{Iya07,IS18}, for an integer $d\geq 1$, an algebra $A$ is called higher Auslander if
\[\mathrm{dom}.\dim A = d = \mathrm{gl}.\dim A,\] 
and $A$ is called  minimal Auslander-Gorenstein if 
\[\mathrm{dom}.\dim A = d = \mathrm{id}_A\,A.\] 
Higher Auslander algebras are just the minimal Auslander-Gorenstein algebras of finite global dimension.

For algebras of selfinjective dimension $1$, we have the following.

\begin{prop} \label{prop:ausgor1}
    Let $A$ be a Nakayama  algebra and $R(A)$ its resolution quiver. 
    Then the selfinjective dimension of $A$ is equal to $1$ if and only if 
    \begin{enumerate}
        \item $\dist A=1$;
        \item $R(A)$ is connected of weight $1$;
        \item all cyclic vertices in $R(A)$ are $*$-black.
    \end{enumerate}
\end{prop}

\begin{proof}
    $``\implies"$   
    Since $\mathrm{id}_A\,A=1$, 
    by Lemma \ref{lem:finite-vd} we have $\mathrm{gl}.\dim A=1$.
    There is a direct sum decomposition of algebras
    \[A=A_1\oplus A_2\oplus \cdots\oplus A_r,\] 
    where $A_i$ is a hereditary linear Nakayama algebra for each $i\leq r$.
    By Lemma \ref{lem:finite-gld} $\dist A=1$ and $R(A)$ is connected of weight  $1$.  
    Let $S$ be a cyclic vertex in $R(A)$.
    Then $S$ is the simple module at the source of $A_i$ 
    and $\tau^{-1}{S}$  is the simple module at the sink of  $A_{i-1}$. 
    Since $\tau^{-1}S$ is projective, it is black.

    $``\impliedby"$
    Let $T$ be a black vertex and $S$ its successor.
    Since $\dist A=1$,  we have $S$ is cyclic.
    Then $S$ is $*$-black. By Lemma \ref{lem:black} $P(S)$ is injective.
    By \eqref{eq:elementary} there is an exact sequence 
    \[0\to \Delta(S)\to P(\tau T) \overset{\alpha}\to P(T) \to T \to 0.\]
    Since $R(A)$ is connected of weight $1$, 
    by Lemma \ref{lem:finite-gld} the global dimension of $A$ is at most $2$.
    It follows from \eqref{eq:delta} that  $\Delta(S)=P(S)$.
    Then $\alpha$ is zero and $T$ is projective.
    We conclude that the selfinjective dimension of $A$ is equal to $1$.
\end{proof}

\begin{rmk} \label{rmk:hereditary}
Let $A$ be a Nakayama algebra of global dimension $1$.
Since not all connected component of $A$ are simple,
there exists at least one black leaf in $R(A)$.
\end{rmk}

\begin{exm}
Let $A$ be a Nakayama algebra with Kupisch series $(1,3,2,1)$.
Then $A$ is the direct sum of two  hereditary linear Nakayama algebras.
The resolution quiver of $R(A)$ is the following.

\[
\begin{tikzpicture}
\node[black-vertex] (1) at  (0,0) {1};
\node[red-vertex] (2) at  (2,0) {2};
\node[red-vertex] (3) at  (-1.73,1) {3};
\node[black-vertex] (4) at  (-1.73,-1) {4};
\draw[edge] (1) to[bend left] (2);
\draw[edge] (2) to[bend left] (1);
\draw[edge] (3) to (1);
\draw[edge] (4) to (1);
\end{tikzpicture}\]

Observe that  $4$ is a black leaf in the resolution quiver of $A$.
\end{exm} 

For  algebras of dominant dimension at least $2$, we need the following.

\begin{lem} \label{lem:domdim2}
Let $A$ be a Nakayama  algebra. 
The following are equivalent.
\begin{enumerate}
    \item $\mathrm{dom}.\dim A\geq 2$;
    \item every leaf in $R(A)$ is $*$-black;
    \item every $*$-red vertex  in $R(A)$ is the successor of a $*$-black vertex.
\end{enumerate}
\end{lem}

\begin{proof}
$(1) \implies (2)$
Let $S$ be a $*$-red vertex in $R(A)$.
Take a $*$-black vertex $T$ such that $\gamma(T)=\gamma(S)$.
Then $P(T)$ is injective.
There is an exact sequence
\[ 0\to P(S) \to P(T) \to I(\tau^{-1}S).\]
Since $\mathrm{dom}.\dim A\geq 2$, we have $I(\tau^{-1}S)$ is projective.
Then $S$ is not a leaf.

$(2) \implies (3)$
Let $S$ be a $*$-red vertex in $R(A)$.
Since $S$ is not a leaf, there is a $*$-black vertex $T$ such that $\gamma(T)=S$.
Then $S$ is the successor of a $*$-black vertex.

$(3) \implies (1)$ 
This follows from \cite[6.2]{NRTZ19}.
Let $P(T)$ be an indecomposable non-injective projective $A$-module.
Then $T$ is $*$-red and there is a $*$-black simple $A$-module $S$ such that $\gamma(S)=T$.
Since $S$ is $*$-black, $P(S)$ and $I(\tau^{-1}T)$ are isomorphic.
Then we have $\mathrm{dom}.\dim A\geq 2$.
\end{proof}

\begin{prop} \label{prop:ausgor2}
    Let $A$ be a Nakayama  algebra and $R(A)$ its resolution quiver. 
    Then $A$ is minimal Auslander-Gorenstein of selfinjective dimension $2$ if and only if 
    \begin{enumerate}
        \item $\dist A=1$;
        \item  $|\gamma^{-1}(S)|\leq 2$ for every cyclic vertex $S$ in $R(A)$;
        \item all cyclic vertices in $R(A)$ are  black.
    \end{enumerate}
\end{prop}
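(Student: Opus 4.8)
The plan is to reduce this statement to Proposition \ref{prop:ausgor1} via the syzygy filtered algebra $\varepsilon(A)$, exactly as Corollary \ref{cor:cond-B} is set up to allow. First I would observe that $\mathrm{id}_A\,A=2$ forces, by Lemma \ref{lem:finite-vd}(2), the value $\dist A=1$; and $\mathrm{id}_A\,A<\infty$ with $\mathrm{gl}.\dim A=\infty$ (the selfinjective case is impossible since a selfinjective Nakayama algebra has $\mathrm{id}_A\,A=0$, not $2$). So $A$ is Gorenstein but not selfinjective, whence by Lemma \ref{lem:finite-vd}(1) all cycles in $R(A)$ are black; this is precisely condition (3). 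Thus conditions (1) and (3) are \emph{necessary} immediately, and the real content is matching condition (2), the bound $|\gamma^{-1}(S)|\leq 2$ on cyclic vertices, with the dominant dimension being exactly $2$.

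Next I would handle the dominant dimension. Since $\dist A=1$, every black vertex $T$ has its successor $\gamma(T)$ cyclic, and the elementary sequence \eqref{eq:elementary} together with \eqref{eq:delta} (using $\mathrm{pd}_A\,T\in\{1,2\}$ here, since global dimension is infinite but $\dist A=1$ gives $\mathrm{gl}.\dim A\leq 2$ on the finite part — more precisely every simple of finite projective dimension has pd $\leq 2$) controls the second syzygies. The key computation is: $\mathrm{dom}.\dim A\geq 2$ if and only if $I(\tau^{-1}T)$ is projective for every leaf $T$, equivalently (by Lemma \ref{lem:domdim2} and its proof, reading the exact sequence $0\to P(S)\to P(\tau^{-m}S)\to I(\tau^{-1}S)$) every relevant injective envelope is projective. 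I would translate "$|\gamma^{-1}(S)|\leq 2$ for cyclic $S$, all cyclic vertices black" into the statement that each cyclic vertex $S$ has exactly one black predecessor and at most one red predecessor, and use Corollary \ref{cor:two-black} to identify when $S$ together with $\tau^{-1}S$ behave correctly. The upshot should be: conditions (1)–(3) together are equivalent to "$\dist A=1$ and $A$ satisfies condition (B) of Section 3 and $\varepsilon(A)$ is hereditary-and-satisfies-\ref{prop:ausgor1}(3)", so that $\varepsilon(A)$ is minimal Auslander–Gorenstein of selfinjective dimension $1$.

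For the equivalence I would then invoke Corollary \ref{cor:cond-B}: under condition (B), $\mathrm{id}_A\,A=\mathrm{id}_{\varepsilon(A)}\,\varepsilon(A)+2$, so $\mathrm{id}_A\,A=2$ iff $\mathrm{id}_{\varepsilon(A)}\,\varepsilon(A)=1$. It remains to show that the dominant dimensions shift correspondingly, i.e. $\mathrm{dom}.\dim A=2$ iff $\mathrm{dom}.\dim\varepsilon(A)=1$; this uses Lemma \ref{lem:inj} to match indecomposable injective objects in $\scF$ (equivalently $\varepsilon(A)$-modules) with $A$-modules, and the fact that $\scF$ contains all second syzygies, so a minimal injective coresolution of ${}_AA$ of the shape $0\to A\to (\text{proj})\to (\text{proj})\to \cdots$ corresponds to one for $\varepsilon(A)$ starting one step later. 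Then "$A$ min. Aus.–Gor. of id $2$" $\iff$ "$\varepsilon(A)$ min. Aus.–Gor. of id $1$" $\iff$ (by Proposition \ref{prop:ausgor1}) the three conditions of \ref{prop:ausgor1} hold for $\varepsilon(A)$, and finally I would check that, given $\dist A=1$, these three conditions on $R(\varepsilon(A))\cong R(A)'$ are equivalent to conditions (1)–(3) of the present statement on $R(A)$ — the map $\varphi$ of Section 3 being the bookkeeping device, with $R(A)'$ obtained from $R(A)$ by deleting the leaves.

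The main obstacle I anticipate is the careful bookkeeping in the last step: verifying that condition (B) holds \emph{automatically} once (1)–(3) are assumed (so that Corollary \ref{cor:cond-B} applies), and then that the predecessor-count condition (2) and blackness condition (3) for $R(A)$ translate exactly into connectedness of weight $1$ plus the $\tau^{-1}$-blackness condition for $R(\varepsilon(A))$. Getting the red-vs-black predecessors of cyclic vertices to line up with the leaves that $\varphi$ discards — and confirming no parity or off-by-one error creeps into the $\mathrm{dom}.\dim$ shift — is where the genuine work lies; the homological-algebra skeleton is routine given the lemmas already established.
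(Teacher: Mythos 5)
Your plan has a structural flaw that makes the reduction strategy unworkable here, and the paper indeed does something entirely different for this case. First, the arithmetic: Corollary \ref{cor:cond-B} gives $\mathrm{id}_A\,A=\mathrm{id}_{\varepsilon(A)}\,\varepsilon(A)+2$, so $\mathrm{id}_A\,A=2$ would force $\mathrm{id}_{\varepsilon(A)}\,\varepsilon(A)=0$, i.e.\ $\varepsilon(A)$ selfinjective --- \emph{not} $\mathrm{id}_{\varepsilon(A)}\,\varepsilon(A)=1$ as you claim. The shift by $2$ preserves parity, so an even-dimension statement can never reduce to Proposition \ref{prop:ausgor1}, which concerns odd dimension $1$. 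Second, and more fatally, the dominant-dimension shift $\mathrm{dom}.\dim A=\mathrm{dom}.\dim\varepsilon(A)+2$ is, by Lemma \ref{lem:domdim3}, \emph{equivalent} to $\mathrm{dom}.\dim A\geq 3$; since every Nakayama algebra has dominant dimension at least $1$, the shift can never produce $\mathrm{dom}.\dim A=2$. This is precisely why the paper treats selfinjective dimension $2$ (like dimension $1$) as a base case outside the $\varepsilon$-induction, which only starts at $d\geq 3$ via Corollary \ref{cor:reduction}. There is also a smaller gap in your necessity argument for condition (3): Lemma \ref{lem:finite-vd}(1) applies only when the global dimension is infinite, but a minimal Auslander--Gorenstein algebra of selfinjective dimension $2$ may well have finite global dimension (e.g.\ Auslander algebras), so "Gorenstein $\Rightarrow$ all cycles black" does not cover that case.

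The paper's actual proof is direct. For necessity: (1) follows from the dimension formulas in Lemmas \ref{lem:finite-gld} and \ref{lem:finite-vd}; (2) is proved by contradiction --- three predecessors $T,\tau T,\tau^2T$ of a cyclic vertex would force $T,\tau T$ red, then Lemma \ref{lem:domdim2} (using $\mathrm{dom}.\dim A\geq 2$) makes $\tau T,\tau^2T$ non-leaves, hence cyclic by (1), which is absurd; (3) shows every red vertex $S$ has $P(S)$ injective and then $\mathrm{id}_A\,S=1$, so $S$ is a leaf by Lemma \ref{lem:omega} and in particular noncyclic. For sufficiency, Gorensteinness plus $\dist A=1$ pins $\mathrm{id}_A\,A$ to $\{1,2\}$, Remark \ref{rmk:hereditary} rules out $1$, and $\mathrm{dom}.\dim A\geq 2$ is verified via the criterion of \cite[6.2]{NRTZ19} by an explicit combinatorial check that every red simple lies in the image of $\tau^{-1}\gamma\tau$ applied to black simples. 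If you want to salvage your outline you would need to prove an analogue of Lemma \ref{lem:domdim3} characterizing $\mathrm{dom}.\dim A=2$ intrinsically; as written, the proposal does not reach the dominant-dimension statement at all.
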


\begin{proof}
    ``$\implies$'' 
    (1) If $A$ has finite global dimension, then $\dist A=1$ by Lemma \ref{lem:finite-gld}.
    If $A$ has infinite global dimension, by Lemma \ref{lem:finite-vd} we have $\dist A=1$.

    (2) Suppose three distinct vertices $T,\tau T,\tau^2T$ have the same successor in $R(A)$.
    Then $T$ and $\tau T$ are red.
    Since $\mathrm{dom}.\dim A\geq 2$, 
    by Lemma \ref{lem:domdim2} $\tau T$ and $\tau^2T$ are non-leaf vertices.
    Since $\dist A=1$, both $\tau T$ and $\tau^2T$  are cyclic.
    This is impossible.

    (3) Let  $S$ be a red vertex in $R(A)$. 
    By Lemma \ref{lem:domdim2} $\tau^{-1}S$ is black.
    Then $P(S)$ is injective.
    Since $\mathrm{id}_A\,A=2$, there is an exact sequence
    \[  0\to  P(\tau S) \to P(S) \to I(S) \to I(\tau^{-1} S)\to 0.\]
    By Lemma \ref{lem:leaf} $S$ is a leaf in $R(A)$.
    In particular, $S$ is not cyclic.

    ``$\impliedby$'' 
    Since all cyclic vertices in $R(A)$ are black,
    by Lemma \ref{lem:finite-vd}  the algebra $A$ is  Gorenstein.
    Since $\dist A=1$, the selfinjective dimension of $A$ is at most $2$.
    By Remark \ref{rmk:hereditary}  $A$ is not hereditary. 
    Then the selfinjective dimension of $A$ is $2$.

    Let $S$ be a leaf vertex in $R(A)$. 
    Since $\dist A=1$ and all cyclic vertices are black, 
    then $S$ is red and $\gamma(S)=\gamma(\tau S)$. 
    Since $\gamma(S)$ has at most $2$ predecessors, $\tau^{-1}S$ is black.
    By Lemma \ref{lem:domdim2}, the dominant dimension of $A$ is at least $2$.
    Then $A$ is minimal Auslander-Gorenstein of selfinjective dimension $2$.
\end{proof}

\begin{exm}
Let $A$ be a Nakayama algebra with Kupisch series $(6,7,6,7)$.
The resolution quiver $R(A)$ is the following.

\[\begin{tikzpicture}
\node[black-vertex] (1) at  (-1,0) {1};
\node[red-vertex] (2) at  (-3,0) {2};
\node[black-vertex] (3) at  (1,0) {3};
\node[red-vertex] (4) at  (3,0) {4};
\draw[edge] (1) to[bend left] (3);
\draw[edge] (3) to[bend left] (1);
\draw[edge] (2) to (1);
\draw[edge] (4) to (3);
\end{tikzpicture}\]

It is easy to check that $A$ fulfills the conditions in Proposition \ref{prop:ausgor2}. 
Then $A$ is a minimal Auslander-Gorenstein algebra of selfinjective dimension $2$.
\end{exm}

For algebras of dominant dimension at least $3$, recall the following key result from \cite[4]{STZ24}.
We give a short proof here.

\begin{lem} \label{lem:domdim3}
Let $A$ be a Nakayama algebra. 
The following are equivalent.
\begin{enumerate}
   \item $\mathrm{dom}.\dim A\geq 3$;
   \item every leaf  in $R(A)$ is black and $*$-black;
   \item $\gamma^{-1}(\gamma S)=\{S\}$ for every leaf $S$ in $R(A)$;
   \item $\defect A=\defect \varepsilon(A)$;
   \item $\mathrm{dom}.\dim A=\mathrm{dom}.\dim \varepsilon(A)+2$.
\end{enumerate}
\end{lem}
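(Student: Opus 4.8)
The plan is to prove the chain of equivalences by establishing $(1)\iff(2)$, then $(2)\iff(3)$, and finally tying $(2)$ to the syzygy-filtered-algebra statements $(4)$ and $(5)$. The equivalence $(2)\iff(3)$ is immediate from Corollary \ref{cor:two-black}, applied simultaneously to every leaf $S$, since $\gamma^{-1}(\gamma S)=\{S\}$ holds exactly when $S$ and $\tau^{-1}S$ are both black. So the real content is in the other two links.

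For $(1)\iff(2)$, I would argue leaf by leaf. Fix a leaf $S$ in $R(A)$; by Lemma \ref{lem:omega}, $\operatorname{id}_A S=1$, so the start of the minimal injective resolution of $A$ is controlled by these leaves. More precisely, the indecomposable injectives appearing in $I^0=A$-hull data are the $I(a)$, and the first place projectivity of the injective resolution of $A$ can fail is governed by whether the relevant injective envelopes are projective; the standard translation (as in the proof of Lemma \ref{lem:domdim2} and \cite[6.2]{NRTZ19}) is that $\operatorname{dom}.\dim A\geq 2$ always holds for Nakayama algebras, and $\operatorname{dom}.\dim A\geq 3$ additionally requires that for each leaf $S$ the module $I(\tau^{-1}S)$ be not just projective but that the \emph{next} syzygy/cosyzygy step also land in the projectives. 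I would unwind this using the exact sequence \eqref{eq:inj}: for a leaf $T$ with $\gamma(T)=S$, the sequence $0\to J(\tau^{-1}S)\to P(T)\to P(I(\tau^{-1}T))\xrightarrow{f} I(\tau^{-1}T)\to 0$ shows that the obstruction to the third injective in a minimal injective coresolution being projective is precisely that $I(\tau^{-1}T)$ be projective, i.e.\ that $T$ be a non-leaf — equivalently, by Lemma \ref{lem:black}(1), that $\tau^{-1}(\tau^{-1}S)$-type conditions hold. Chasing the definitions, the condition "for every leaf $S$, both $S$ and $\tau^{-1}S$ are black" is exactly the statement that no red vertex is a leaf and no $*$-red vertex is a leaf, which is what forces $I^2$ in the injective coresolution of $A$ to remain projective. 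I expect this bookkeeping — matching "$\operatorname{dom}.\dim A\geq 3$" against the two-sided blackness condition on leaves — to be the main obstacle, since one must be careful about which simple modules contribute injective summands at the first two steps and verify that the sequences \eqref{eq:elementary} and \eqref{eq:inj} genuinely detect the failure at step three and not earlier.

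For the link with $\varepsilon(A)$, I would use Lemma \ref{lem:defect} together with the description of injective objects in $\scF$ from Lemma \ref{lem:inj}. By Lemma \ref{lem:defect}, $\defect\varepsilon(A)$ counts simple $A$-modules of projective dimension $3$, while $\defect A$ counts simple $A$-modules of projective dimension $1$, i.e.\ red vertices. Dually, using Lemma \ref{lem:inj}, the injective objects in $\scF$ that are non-projective correspond to simple $A$-modules of injective dimension $3$; so $\operatorname{dom}.\dim A\geq 3$ — which says injective dimension $1$ is not "lost" prematurely, equivalently the leaves are well-behaved — translates into a count of leaves in $R(A)$ versus leaves in $R(\varepsilon(A))$ matching up. The cleanest route is: under the map $\varphi\colon R(\varepsilon(A))\to R(A)$, condition $(2)$ says every leaf of $R(A)$ is both black and $*$-black, hence (by the discussion preceding Proposition \ref{prop:cond-Bstar}) both maps $\varphi_0$ and $\varphi_1$ are bijective and moreover the leaves of $R(A)$, $R(A)'$, and $R(\varepsilon(A))$ are in natural bijection; passing to defects via Lemma \ref{lem:defect} and the "number of red vertices equals number of leaves" identity yields $(2)\iff(4)$. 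Finally $(4)\iff(5)$ follows from \eqref{eq:injdim}-type shifting — specifically, Corollary \ref{cor:cond-B}'s method combined with the fact (from \cite[3.23]{Sen19} or the cited \cite[4.1,4.2]{STZ24}) that dominant dimension drops by exactly $2$ under $\varepsilon$ once it is at least $3$ — so that $\operatorname{dom}.\dim A\geq 3$ is equivalent to $\operatorname{dom}.\dim\varepsilon(A)\geq 1$, and since every Nakayama algebra has dominant dimension at least $1$, equivalent to the exact equality in $(5)$.

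A cleaner packaging, which I might adopt in the final write-up, is to prove $(2)\Rightarrow(5)\Rightarrow(4)\Rightarrow(3)\Rightarrow(2)$ cyclically, using $(2)\iff(3)$ via Corollary \ref{cor:two-black} to close the loop, since $(5)\Rightarrow(4)$ and $(4)\Rightarrow(3)$ are each just a defect/dominant-dimension count, $(2)\Rightarrow(5)$ is the shifting argument, and $(1)$ gets inserted by noting $(5)\Rightarrow(1)$ trivially (dominant dimension of a Nakayama algebra being $\geq1$ bumps up by $2$) and $(1)\Rightarrow(3)$ by the sequence \eqref{eq:inj} argument above. Either way, the one genuinely delicate point is the homological bookkeeping in \eqref{eq:inj} showing that the third term of the minimal injective coresolution of $A$ is projective precisely when the leaf condition holds; everything else is a counting argument feeding off Lemmas \ref{lem:inj} and \ref{lem:defect}.
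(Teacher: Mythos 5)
Your proposal is correct and follows essentially the same route as the paper: $(1)\Rightarrow(2)$ via Lemma \ref{lem:domdim2} and the splitting obstruction from the injective coresolution, $(2)\iff(3)$ by Corollary \ref{cor:two-black}, the defect equality by counting leaves of $R(A)'$ against leaves of $R(A)$, $(4)\Rightarrow(5)$ via the sequence \eqref{eq:inj} and condition (B$^*$), and $(5)\Rightarrow(1)$ from $\mathrm{dom}.\dim\varepsilon(A)\geq 1$. The only loose claim is that $(5)\Rightarrow(4)$ is ``just a count'' --- it is not, but your packaging does not need it, since the loop already closes through $(5)\Rightarrow(1)$.
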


\begin{proof}
$(1) \implies (2)$
Let $S$ be a leaf in $R(A)$.
Since $\mathrm{dom}.\dim A\geq 2$,  
by Lemma \ref{lem:domdim2}  $S$ is $*$-black and $P(S)$ is injective.
Assume $S$ is red. There is an  exact sequence
\[0 \to P(\tau S) \to P(S) \to I(S)\to I(\tau^{-1}S)\to 0.\]
Since $\mathrm{dom}.\dim A\geq 3$, then $I(\tau^{-1}S)$ is projective.
This is impossible. 
We conclude that $S$ is black and $*$-black.

$(2) \iff (3)$
This follows from Corollary \ref{cor:two-black}.

$(3) \iff (4)$
Note that $\defect\varepsilon(A)$ is equal to the number of leaf vertices in $R(A)'$.
Then $\defect A$ and $\defect \varepsilon(A)$ are equal
if and only if every leaf  in $R(A)$ is the unique predecessor of its successor.

$(2) \implies (5)$
Let $T$ be a leaf in $R(A)$ and $S$  the successor of $T$.
Since $\tau^{-1}T$ is  black, 
by \eqref{eq:inj} there is an exact sequence 
\[ 0\to  J(\tau^{-1}S) \to P(T) \overset{w}\to   P(I(\tau^{-1}T))\to  I(\tau^{-1}T)\to 0.\]

Since $T$ is a leaf, we have $w\neq 0$.
Note that $A$ satisfies (B$^*$). 
By Proposition \ref{prop:cond-Bstar} any projective injective object in $\scF$ 
is a projective injective $A$-module. Then
\[\mathrm{codom}.\dim_A I(\tau^{-1}T)=\mathrm{codom}.\dim_{\scF} J(\tau^{-1}S)+2.\]
By Lemma \ref{lem:inj} we have $\mathrm{dom}.\dim A=\mathrm{dom}.\dim \varepsilon(A)+2$.

$(5) \implies (1)$ 
Since $\mathrm{dom}.\dim \varepsilon(A)\geq 1$, we have $\mathrm{dom}.\dim A\geq 3$.
\end{proof}

Note that $\mathrm{dom}.\dim A\geq 3$ implies (B) and (B$^*$). 
We have the following.

\begin{cor} \label{cor:color-preserve}
Let $A$ be a Nakayama  algebra of dominant dimension at least $3$ 
and  $\Delta(S)$  a vertex in $R(\varepsilon{A})$. Then 
	\begin{enumerate}
		\item $\Delta(S)$ is black if and only if $S$ is black;
		\item $\Delta(S)$ is $*$-black if and only if $S$ is $*$-black. 
	\end{enumerate}      
\end{cor}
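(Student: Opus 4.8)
The plan is to leverage the previously established machinery, in particular Lemma \ref{lem:domdim3} (which tells us that dominant dimension at least $3$ forces conditions (B) and (B$^*$)) together with Propositions on color preservation under $\varphi$. Since $A$ has dominant dimension at least $3$, the map $\varphi\colon R(\varepsilon A)\to R(A)$ preserves vertex colors by the Proposition characterizing condition (B), and preserves $*$-colors by Proposition \ref{prop:cond-Bstar} characterizing (B$^*$). The two assertions of the corollary are then essentially restatements of ``$\varphi$ preserves colors'' and ``$\varphi$ preserves $*$-colors'' in terms of the module $\Delta(S)$, once one observes that $\varphi(\Delta(S))=S$ by construction of $\varphi$.

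For part (1), I would argue directly: $\Delta(S)$ is a red vertex in $R(\varepsilon A)$ precisely when its projective cover $P(\Delta(S))$ in $\scF$ is injective in $\scF$, i.e. when $\mathrm{pd}_{\varepsilon A}\Delta(S)=1$. By the description of $\scF$ and the fact (recorded in the proof of the Proposition on (B)) that an indecomposable projective object of $\scF$ which is minimal in $\mod A$ remains minimal in $\scF$, together with condition (B) coming from Lemma \ref{lem:domdim3}, the red vertices of $R(\varepsilon A)$ correspond exactly to the red vertices of $R(A)'$. Hence $\Delta(S)$ is black in $R(\varepsilon A)$ iff $S$ is black in $R(A)$. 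For part (2), the same reasoning applied to $*$-colors via Proposition \ref{prop:cond-Bstar} and condition (B$^*$) gives that $\tau_\varepsilon^{-1}\Delta(S)$ is $*$-black, i.e. $\Delta(S)$ is $*$-black as a vertex of $R(\varepsilon A)$, iff $S$ is $*$-black in $R(A)$, which is the claim.

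The main obstacle I anticipate is not logical but bookkeeping: one must be careful that the translation $\tau_\varepsilon$ on simple objects of $\scF$ is compatible under $\varphi$ with $\tau$ on $\mod A$ in the precise sense needed for the $*$-color statement — that is, that $\varphi(\tau_\varepsilon^{-1}\Delta(S))$ relates correctly to $\tau^{-1}S$. This requires invoking the identity $\gamma_\varepsilon(\Delta(S))=\Delta(\gamma S)$ from \cite[B.4]{Rin21} and the fact that $\varphi$ commutes with the $\gamma$-maps, so that predecessors and successors match up; once that compatibility is in hand, the color transport is immediate. I would therefore open the proof by recalling that (B) and (B$^*$) hold (citing Lemma \ref{lem:domdim3}), then invoke the two Propositions on color/$*$-color preservation, and finally translate ``vertex color of $\Delta(S)$ in $R(\varepsilon A)$'' into the module-theoretic statements (1) and (2) using $\varphi(\Delta(S))=S$.
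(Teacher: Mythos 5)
Your proposal follows essentially the same route as the paper: the corollary is deduced there directly from the remark that dominant dimension at least $3$ forces conditions (B) and (B$^*$) (via Lemma \ref{lem:domdim3}), combined with the proposition characterizing (B) as color preservation and Proposition \ref{prop:cond-Bstar} characterizing (B$^*$) as $*$-color preservation under $\varphi$. One small slip worth noting: your parenthetical claim that $\Delta(S)$ is red precisely when $P(\Delta(S))$ is injective in $\scF$ is not the definition of red (red means projective dimension $1$, whereas injectivity of $P(S)$ characterizes blackness of $\tau^{-1}S$ by Lemma \ref{lem:black}), but this aside is not load-bearing since your main argument via the two propositions is correct.
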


Combining Lemma \ref{lem:domdim2} and \ref{lem:domdim3} with the previous result, we have the following.

\begin{cor} \label{cor:higherdomdim}
Let $A$ be a Nakayama algebra and $m\geq 0$.
\begin{enumerate}
 \item The following are equivalent.
   \begin{enumerate}
    \item  $\mathrm{dom}.\dim A\geq 2m +1$;
    \item  $S,\gamma(S),\cdots, \gamma^{m-1}(S)$ are black and $*$-black for every leaf $S$;
    \item  $|\gamma^{-1}\gamma^i(S)|=1$ for every leaf $S$ and $i\leq m$.
\end{enumerate}
 \item The following are equivalent.
 \begin{enumerate}
  \item $\mathrm{dom}.\dim A\geq 2m+2$;
  \item  $S,\gamma(S),\cdots, \gamma^{m-1}(S)$ are black and $*$-black, and $\gamma^{m}(S)$ is $*$-black 
  for every leaf $S$;
  \item $|\gamma^{-1}\gamma^m(S)|=1$ and $\gamma^m(S)$ is $*$-black for every leaf $S$ and $i\leq m$.
   \end{enumerate}
\end{enumerate}
\end{cor}

We have the following; compare \cite[4.4.2]{STZ24}.

\begin{prop} \label{prop:reduction}
    Let $A$ be a Nakayama algebra of dominant dimension  $d\geq 3$. 
    Then  $A$ is minimal Auslander-Gorenstein of selfinjective dimension $d$ if and only if 
    $\varepsilon(A)$ is minimal Auslander-Gorenstein of selfinjective dimension $d-2$.
\end{prop}

\begin{proof}
This follows from   Lemma \ref{lem:domdim3} and Corollary \ref{cor:cond-B}.
\end{proof}

Now we restate and prove Theorem \ref{mainthm:A}.

\begin{thm} \label{thm:odd}
Let  $A$ be a Nakayama  algebra and $m\geq 1$. 
Then $A$ is minimal Auslander-Gorenstein of selfinjective dimension $2m-1$ if and only if   
      \begin{enumerate}
        \item $\dist S=\dist A=m$ for every leaf $S$ in $R(A)$;
        \item $|\gamma^{-1}(S)|\leq 1$ for every noncyclic vertex $S$ in $R(A)$;
        \item $R(A)$ is connected of weight $1$; 
        \item all cyclic vertices in $R(A)$ are $*$-black.
    \end{enumerate}
\end{thm}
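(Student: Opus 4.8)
The plan is to prove Theorem~\ref{thm:odd} by induction on $m$, using Corollary~\ref{cor:reduction} as the engine that steps down the selfinjective dimension by $2$ and the dominant dimension by $2$ at the cost of replacing $A$ by $\varepsilon(A)$. The base case $m=1$ is exactly Proposition~\ref{prop:ausgor1}: conditions (1)--(4) of the theorem with $m=1$ say that $\dist A=1$ and $\dist S=1$ for every leaf, that $R(A)$ is connected of weight $1$, and that $\tau^{-1}$ of all cyclic vertices are black; when $\dist A=1$ every vertex is at distance $\le 1$ from a cyclic vertex, so condition (2) about noncyclic vertices having at most one predecessor is automatic (a noncyclic vertex is a leaf, hence its successor-preimage under $\gamma$ restricted appropriately behaves, or more simply $|\gamma^{-1}(S)|\le 1$ is forced), matching the hypotheses of Proposition~\ref{prop:ausgor1} after one checks this small compatibility. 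So first I would carefully verify that for $m=1$ the four conditions here are equivalent to the three conditions of Proposition~\ref{prop:ausgor1}.

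For the inductive step, assume $m\ge 2$ and that the statement holds for $m-1$. The key translation is: $A$ is minimal Auslander-Gorenstein of selfinjective dimension $2m-1$ iff $\mathrm{dom}.\dim A = 2m-1 = \mathrm{id}_A\,A$. Since $m\ge 2$ we have $2m-1\ge 3$, so I would argue that conditions (1)--(4) force $\mathrm{dom}.\dim A\ge 3$ via Lemma~\ref{lem:domdim3}: condition (1) with $m\ge 2$ implies in particular that every leaf $S$ is at distance $m\ge 2$ from a cyclic vertex, so $\tau^{-1}S$ and $S$ are both black (a leaf at positive distance cannot be red in the relevant sense, and condition (2) giving $|\gamma^{-1}(\gamma S)|=\{S\}$ via Corollary~\ref{cor:two-black} pins down blackness of $S$ and $\tau^{-1}S$ — this is exactly equivalence $(2)\iff(3)$ in Lemma~\ref{lem:domdim3}). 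Once $\mathrm{dom}.\dim A\ge 3$ is established, Corollary~\ref{cor:reduction} says $A$ is minimal Auslander-Gorenstein of selfinjective dimension $2m-1$ iff $\varepsilon(A)$ is minimal Auslander-Gorenstein of selfinjective dimension $2m-3 = 2(m-1)-1$. By the inductive hypothesis the latter holds iff $R(\varepsilon(A))$ satisfies conditions (1)--(4) with parameter $m-1$.

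The heart of the proof is then the dictionary between $R(A)$ and $R(\varepsilon(A))$. Using the injective map $\varphi\colon R(\varepsilon(A))\to R(A)$ with image $R(A)'$, together with Lemma~\ref{lem:domdim3}, Corollary~\ref{cor:color-preserve}, and the bijections $\varphi_0,\varphi_1$ on leaves, I would show: (a) $\dist_{\varepsilon(A)}\Delta(S) = \dist_A S - 1$ for cyclic-distance, so $\dist\varepsilon(A) = \dist A - 1$ and condition (1) transfers; (b) weight and connectedness are preserved by $\varphi$ (stated in the excerpt), giving condition (3); (c) Corollary~\ref{cor:color-preserve}(2) transfers condition (4) (blackness of $\tau^{-1}$ of cyclic vertices); (d) the predecessor-count condition (2) for noncyclic vertices transfers because $\mathrm{dom}.\dim A\ge 3$ already forces every leaf of $R(A)$ to be the unique predecessor of its successor, and the noncyclic non-leaf vertices of $R(A)$ correspond to vertices of $R(A)'$ whose predecessor structure matches. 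Conversely, and this is the subtler direction, I would show that if $\mathrm{dom}.\dim A\le 2$ (equivalently Lemma~\ref{lem:domdim3} fails), then at least one of conditions (1)--(4) must fail for $m\ge 2$ — most likely condition (2), since a leaf $S$ with $\mathrm{pd}_A\tau^{-1}S=1$ or with $\gamma^{-1}(\gamma S)\ne\{S\}$ produces a noncyclic vertex with two predecessors or, when combined with $\dist A\ge 2$, contradicts condition (1).

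The main obstacle will be the bookkeeping in the converse direction: showing that the four conditions on $R(A)$ genuinely imply $\mathrm{dom}.\dim A\ge 3$ (so that the reduction applies at all), rather than merely being consistent with it. Concretely, I expect the delicate point to be ruling out red leaves and red-to-black predecessor configurations using only conditions (1)--(4) — i.e., proving that condition (2) together with $\dist A\ge 2$ forces $\gamma^{-1}(\gamma S)=\{S\}$ for every leaf $S$, which is the hypothesis of Lemma~\ref{lem:domdim3}(3). I would handle this by noting that a leaf $S$ with a second predecessor $T\ne S$ satisfying $\gamma(T)=\gamma(S)$ has, by Lemma~\ref{lem:black}(2), $T$ red with $\tau T$ among the preimages; chasing whether $\tau T$ is cyclic or noncyclic and invoking condition (2) or (1) respectively yields the contradiction. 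Everything else is a matter of assembling the already-established lemmas in the right order.
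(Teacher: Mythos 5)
Your proposal is correct and follows essentially the same route as the paper: induction on $m$ with Proposition~\ref{prop:ausgor1} as the base case, deducing $\mathrm{dom}.\dim A\geq 3$ from conditions (1) and (2) via Lemma~\ref{lem:domdim3} (a leaf $S$ has $\dist S=m\geq 2$, so $\gamma(S)$ is noncyclic and $S$ is its unique predecessor), and then stepping down by two via Corollary~\ref{cor:reduction} while transferring the four conditions along $\varphi$ and Corollary~\ref{cor:color-preserve}. The "delicate point" you flag at the end is in fact already settled by your middle paragraph, exactly as in the paper.
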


\begin{proof}
$``\implies"$
We prove by induction on $m$.
If $m=1$, this is done in Proposition \ref{prop:ausgor1}.
Suppose $m\geq 2$. By Proposition \ref{prop:reduction} the syzygy filtered algebra 
$\varepsilon(A)$ is minimal Auslander-Gorenstein of selfinjective dimension $2m-3$.
By induction hypothesis, the following statements hold.

\begin{enumerate}
    \item[(1a)] $\dist \Delta=\dist \varepsilon(A)=m-1$ for every leaf $\Delta$ in 
    $R(\varepsilon A)$;
    \item[(2a)] $|\gamma_\varepsilon^{-1}(\Delta)|\leq 1$  for every noncyclic vertex $\Delta$ in $R(\varepsilon A)$;
    \item[(3a)] $R(\varepsilon A)$ is connected of weight $1$;
    \item[(4a)] all cyclic vertices in $R(\varepsilon A)$ are $*$-black.
\end{enumerate}

Since there is  an injective weight preserving map  of resolution quivers 
from $R(\varepsilon{A})$ to $R(A)$, then (1), (2) and (3) hold.
By Corollary \ref{cor:color-preserve} the statement (4) holds.

$``\impliedby"$
We prove by induction on $m$.
If $m=1$, this is shown in Proposition \ref{prop:ausgor1}.
Suppose $m\geq 2$ and let $S$ be a  leaf in $R(A)$.
Since $\dist S\geq 2$, the successor of $S$ is not cyclic.
Then $S$ is the unique predecessor of its successor. 
By Lemma \ref{lem:domdim3} the dominant dimension of $A$ is at least $3$.

Since there is  an injective weight preserving map  of resolution quivers 
from $R(\varepsilon{A})$ to $R(A)$, then (1a), (2a) and (3a) hold.
By Corollary \ref{cor:color-preserve} the statement (4a) holds.

By the induction hypothesis, the algebra $\varepsilon(A)$ is 
minimal Auslander-Gorenstein of selfinjective dimension $2m-3$. 
Since the dominant dimension of $A$ is at least $3$, 
by Proposition \ref{prop:reduction} 
the algebra $A$ is minimal Auslander-Gorenstein of selfinjective dimension $2m-1$.
\end{proof}

Let us restate Theorem \ref{mainthm:B} as follows.  The proof is similar, we omit it here.

\begin{thm} \label{thm:even}
    Let $A$ be a Nakayama  algebra and $m\geq 1$. 
    Then $A$ is minimal Auslander-Gorenstein of selfinjective dimension $2m$  if and only if 
    \begin{enumerate}
        \item $\dist S=\dist A=m$ for every leaf $S$ in $R(A)$;
        \item $|\gamma^{-1}(S)|\leq 1$ for every noncyclic vertex $S$ in $R(A)$;
        \item $|\gamma^{-1}(S)|\leq 2$ for every cyclic vertex $S$ in $R(A)$;
        \item all cyclic vertices in $R(A)$ are black.
    \end{enumerate}
\end{thm}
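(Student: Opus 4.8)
The plan is to run the same induction on $m$ as in the proof of Theorem~\ref{thm:odd}, with the base case now supplied by Proposition~\ref{prop:ausgor2} and the \emph{even} list of four conditions carried through the syzygy-filtered reduction. For $m=1$ the statement is equivalent to Proposition~\ref{prop:ausgor2}: when $\dist A=1$ every noncyclic vertex of $R(A)$ has distance $1$, hence has no predecessor (a predecessor would have distance $2>\dist A$), so condition~(2) is automatic, while conditions~(1), (3), (4) of Theorem~\ref{thm:even} become precisely the three conditions of that proposition.

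For $m\ge 2$ I would use Corollary~\ref{cor:reduction}. In the direction ``$\Longrightarrow$'', $A$ has dominant dimension $2m\ge 4\ge 3$, so $\varepsilon(A)$ is minimal Auslander--Gorenstein of selfinjective dimension $2m-2$ and, by the induction hypothesis, satisfies the analogues (1a)--(4a) of (1)--(4) with $m-1$ in place of $m$. I would then transport these back along the injective map $\varphi\colon R(\varepsilon A)\to R(A)$, whose image $R(A)'$ is the full subquiver of $R(A)$ on the non-leaf vertices. The crucial structural input is that $\mathrm{dom}.\dim A\ge 3$ forces, through Lemma~\ref{lem:domdim3}, the equality $\gamma^{-1}(\gamma S)=\{S\}$ for every leaf $S$ of $R(A)$; hence the leaves of $R(A)'$ are exactly the successors $\gamma S$ of the leaves $S$ of $R(A)$, each such $\gamma S$ has $S$ as its unique predecessor in $R(A)$, and no other non-leaf vertex changes its set of predecessors when passing from $R(A)$ to $R(A)'$. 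From this: (1a) yields $\dist(\gamma S)=m-1$, hence $\dist S=m$ for every leaf $S$ and $\dist A=m$, which is~(1); the inequality $|\gamma_\varepsilon^{-1}(\Delta(S))|\le|\gamma^{-1}(S)|$, which is an equality unless $S$ has a leaf predecessor---in which case $|\gamma^{-1}(S)|=1$ already---turns (2a), (3a) into (2), (3); and Corollary~\ref{cor:color-preserve}(1) turns (4a) into (4).

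For ``$\Longleftarrow$'' with $m\ge 2$ I would first establish $\mathrm{dom}.\dim A\ge 3$: a leaf $S$ of $R(A)$ has $\dist S=m\ge 2$ by~(1), so its successor $\gamma S$ is noncyclic, so $|\gamma^{-1}(\gamma S)|\le 1$ by~(2), i.e.\ $\gamma^{-1}(\gamma S)=\{S\}$; since this holds for every leaf, Lemma~\ref{lem:domdim3} gives $\mathrm{dom}.\dim A\ge 3$. Then $\varphi$ together with Corollary~\ref{cor:color-preserve} carries (1)--(4) for $A$ to (1a)--(4a) for $\varepsilon(A)$---here the predecessor bounds transport at once via $|\gamma_\varepsilon^{-1}(\Delta(S))|\le|\gamma^{-1}(S)|$---so by the induction hypothesis $\varepsilon(A)$ is minimal Auslander--Gorenstein of selfinjective dimension $2m-2$, and Corollary~\ref{cor:reduction} upgrades this to the assertion that $A$ is minimal Auslander--Gorenstein of selfinjective dimension $2m$. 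I expect the only substantive work to be the bookkeeping in the second paragraph---verifying exactly how deleting the leaves of $R(A)$ affects distances and predecessor counts---and it is precisely the hypothesis $\mathrm{dom}.\dim A\ge 3$, via Lemma~\ref{lem:domdim3}, that makes this controlled: each deleted leaf was the sole predecessor of its (non-leaf) successor, so nothing is created or destroyed unexpectedly, and the parity is preserved automatically since each step of the recursion lowers the selfinjective dimension by exactly $2$.
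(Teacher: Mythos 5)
Your proposal is correct and follows exactly the route the paper intends: the paper omits the proof of Theorem~\ref{thm:even}, stating only that it is ``similar'' to that of Theorem~\ref{thm:odd}, and your argument is precisely that induction via Corollary~\ref{cor:reduction} with Proposition~\ref{prop:ausgor2} as the base case, with Lemma~\ref{lem:domdim3} and Corollary~\ref{cor:color-preserve}(1) controlling the transport of conditions (1)--(4) along $\varphi$. Your bookkeeping of how leaves, distances and predecessor counts behave under passage to $R(A)'$ is accurate and in fact supplies details the paper leaves implicit.
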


For a Nakayama algebra $A$, denote by $\cyc A$ the number of cyclic vertices in its resolution quiver.

\begin{cor}
    Let $A$ be a Nakayama algebra.
    If $A$ is a minimal Auslander-Gorenstein algebra of even selfinjective dimension, then 
    $\cyc A \leq \defect A$.
\end{cor}

\section{Applications and examples}
In this section, we present some applications and examples of our results.

Let $A$ be a Nakayama algebra  with Kupisch series
\[(c_1,c_2,\cdots,c_{n}).\]
Denote by  $\bar{A}$ the Nakayama algebra with Kupisch series
\[(c_1+n,c_2+n,\cdots,c_{n}+n).\]
It is direct to show that there is a color preserving isomorphism between $R(\bar{A})$ and $R(A)$.
However, the weight of $R(\bar{A})$ is grater than the weight of $R(A)$.

By Theorem \ref{thm:even}, we have the following.

\begin{prop}
Let $A$ be a Nakayama algebra and $d$ an even number.
Then $A$ is minimal Auslander-Gorenstein of selfinjective dimension $d$ 
if and only if so is $\bar{A}$.
\end{prop}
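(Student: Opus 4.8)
The strategy is to reduce the statement to Theorem~\ref{thm:even} by checking that the four resolution-quiver conditions are literally unchanged when we pass from $A$ to $\bar A$, while the selfinjective dimension is pinned down by the same theorem on both sides. The key input is the observation already recorded above: the Kupisch series of $\bar A$ is obtained from that of $A$ by adding $n$ to every entry, and this induces a \emph{color-preserving isomorphism} $R(\bar A)\xrightarrow{\sim}R(A)$. First I would verify that this isomorphism is compatible with all the combinatorial data appearing in Theorem~\ref{thm:even}: it sends leaves to leaves, cyclic vertices to cyclic vertices, and it commutes with $\gamma$ and with $\tau$ (since adding $n$ to the Kupisch series changes none of the arrows of $\Delta_n$, the translation $\tau$ is the same map, and $\soc P(a)$ shifts in a way that leaves $\gamma$ unchanged). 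Because it preserves colors, it also preserves the property ``$\tau^{-1}$ of a vertex is black''. Consequently conditions (1)--(4) of Theorem~\ref{thm:even} hold for $A$ if and only if they hold for $\bar A$; in particular $\dist A=\dist\bar A$, so the integer $m$ is the same for both.

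The one place where one must be careful is that the weight of $R(\bar A)$ is strictly larger than that of $R(A)$ (as the remark preceding the proposition points out), so the isomorphism of quivers is \emph{not} weight-preserving. This is exactly why the odd case fails and why the proposition is stated only for even $d$: Theorem~\ref{thm:odd} has the hypothesis ``$R(A)$ connected of weight $1$'', which is destroyed by the shift, whereas Theorem~\ref{thm:even} has no weight condition at all. So the proof amounts to the remark that Theorem~\ref{thm:even}'s criterion is \emph{weight-blind}. I would make this point explicitly.

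Putting it together: suppose $A$ is minimal Auslander-Gorenstein of even selfinjective dimension $d$, say $d=2m$. By Theorem~\ref{thm:even} conditions (1)--(4) hold for $R(A)$ with this value of $m$. Via the color-preserving isomorphism $R(\bar A)\cong R(A)$, conditions (1)--(4) hold for $R(\bar A)$ with the same $m$, so again by Theorem~\ref{thm:even} (applied to $\bar A$) the algebra $\bar A$ is minimal Auslander-Gorenstein of selfinjective dimension $2m=d$. The converse is identical with the roles of $A$ and $\bar A$ exchanged (the map $A\mapsto\bar A$ is reversible: $A$ is obtained from $\bar A$ by subtracting $n$ from every Kupisch entry, which is legitimate since all entries of $c(\bar A)$ are at least $n+1$). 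The main obstacle, such as it is, is purely expository: one must confirm cleanly that the isomorphism $R(\bar A)\cong R(A)$ intertwines $\gamma$, $\tau$, and the coloring, i.e.\ that none of the data in (1)--(4) sees the uniform shift of the Kupisch series — everything else is immediate from Theorem~\ref{thm:even}.
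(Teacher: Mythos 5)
Your proposal is correct and follows exactly the paper's route: the paper also deduces the statement directly from Theorem~\ref{thm:even} via the color-preserving isomorphism $R(\bar A)\cong R(A)$, noting that the even criterion involves no weight condition (the weight being the only invariant changed by the shift). Your write-up simply spells out the details that the paper leaves as ``direct to show.''
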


Let $2\leq m\leq n$. 
Denote by $\bbA_n$ the hereditary linear Nakayama algebra of rank $n$.
Let  $N_n(m)$ be the quotient algebra of $\bbA_n$ 
via the ideal generated by all paths of length $m$.
The algebra $N_n(m)$ is a linear Nakayama algebra and has finite global dimension.
The Kupisch series of $N_n(m)$ is 
\[((m)^{n-m+1},m-1,\cdots,2,1).\]

If $m=2$, then $N_n(m)$ is higher Auslander of global dimension $n-1$.
Suppose  $m>2$ and $N_n(m)$ is a higher Auslander algebra. 
We write 
\[n=qm+r\] 
such that $q,r$ are integers and $0 \leq r\leq m-1$.
Since $S(1)$ is cyclic and $S(1)$ has $m$ predecessors in the resolution quiver, 
the global dimension of $N_n(m)$ is odd by Theorem \ref{thm:even}.
Since $S(n-r+1)$ is cyclic, $S(n-r)$ is black by Theorem \ref{thm:odd}. 
It follows  that $r=0$ and $m$ divides $n$.

If $m$ divides $n$, it is routine to  check that $N_n(m)$ is  higher Auslander 
of global dimension $2n/m-1$.

\begin{prop}
The algebra  $N_n(m)$ is a higher Auslander algebra if and only if $m=2$ or $m$ divides $n$.
In both cases, the global dimension of $N_n(m)$ is $2n/m-1$.
\end{prop}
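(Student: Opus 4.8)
The plan is to read off the resolution quiver $R(N_{n}(m))$ from the Kupisch series $c=((m)^{\,n-m+1},m-1,\dots,2,1)$ and then invoke Theorems~\ref{thm:odd} and~\ref{thm:even}. Writing $S(a)$ for the simple at vertex $a\in\{1,\dots,n\}$ and reducing indices modulo $n$ into $\{1,\dots,n\}$, one has $\gamma(S(a))=S(a+c_{a})$, so $R(N_{n}(m))$ has the arrow $a\to a+m$ for $1\le a\le n-m+1$ and the arrow $a\to 1$ for $n-m+1\le a\le n$. Writing $n=qm+r$ with $0\le r\le m-1$, this pins down the combinatorics completely: the cyclic vertices are $1,\,m+1,\,\dots,\,(q-1)m+1$, together with $qm+1$ when $r\ge 1$; the leaves are $S(2),\dots,S(m)$; and, since $S(a)$ is red precisely when $c_{a+1}=c_{a}-1$, the red vertices are exactly $S(n-m+1),\dots,S(n-1)$. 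In particular $S(1)$ is always cyclic with $|\gamma^{-1}(S(1))|=m$, while every other vertex has at most one $\gamma$-predecessor. Setting up this description correctly --- especially the behaviour near the boundary index $n-m+1$ and in the degenerate case $m=2$, where $\gamma$ wraps around differently --- is the backbone of the whole argument.

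For the ``only if'' direction, assume $N_{n}(m)$ is higher Auslander. It has finite global dimension (as recalled above); hence its global dimension equals its selfinjective dimension, so $N_{n}(m)$ is minimal Auslander--Gorenstein and one of Theorem~\ref{thm:odd}, Theorem~\ref{thm:even} applies according to the parity of that dimension. Suppose in addition $m\ge 3$ (for $m=2$ there is nothing to prove here). The cyclic vertex $S(1)$ has $m\ge 3$ predecessors, which violates condition~(3) of Theorem~\ref{thm:even}; hence the selfinjective dimension of $N_{n}(m)$ is odd and Theorem~\ref{thm:odd} applies. Its condition~(4) then forces $\tau^{-1}$ of the cyclic vertex $S(qm+1)=S(n-r+1)$, which is $S(n-r)$, to be black. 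But if $r\ge 1$ then $n-r\in\{n-m+1,\dots,n-1\}$, so $S(n-r)$ is red --- a contradiction. Therefore $r=0$, that is, $m\mid n$.

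For the ``if'' direction, assume $m=2$ or $m\mid n$. A direct computation with the minimal projective resolutions of the simple modules --- using $\Omega^{2}S(a)\cong S(a+m)$ whenever $c_{a}=c_{a+1}=m$, together with $\mathrm{pd}_{A}\,S(a)\le 1$ for $a\in\{n-m+1,\dots,n\}$ --- gives $\mathrm{gl}.\dim N_{n}(m)=2n/m-1$ in both cases. Since this is finite, $R(N_{n}(m))$ is connected of weight $1$ by Lemma~\ref{lem:finite-gld}, and it remains to verify the remaining hypotheses of Theorem~\ref{thm:odd} when $2n/m-1$ is odd (always when $m\mid n$, and when $m=2$ with $n$ even) and of Theorem~\ref{thm:even} when $m=2$ with $n$ odd. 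From the explicit description these are routine: every leaf reaches the cycle in $\dist N_{n}(m)$ steps (there is a single leaf when $m=2$; when $m\mid n$ the leaves $S(2),\dots,S(m)$ are symmetric and each reaches the cycle in $n/m=\dist N_{n}(m)$ steps); every noncyclic vertex has at most one predecessor, and the only cyclic vertex with more than one predecessor is $S(1)$, with exactly $m\le 2$ of them; and $\tau^{-1}$ of every cyclic vertex avoids the red range $\{n-m+1,\dots,n-1\}$ (and, when $m=2$ with $n$ odd, so do the cyclic vertices themselves), hence is black. Thus $N_{n}(m)$ is minimal Auslander--Gorenstein of selfinjective dimension $2n/m-1$, and having finite global dimension it is higher Auslander with $\mathrm{gl}.\dim N_{n}(m)=2n/m-1$.

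The argument is essentially bookkeeping. The one place that genuinely needs care is selecting, in the ``only if'' direction, the correct cyclic vertex $S(n-r+1)$ whose $\tau^{-1}$-image falls into the red range; and the case $m=2$ must be kept separate throughout, since there the selfinjective dimension may be even and the contradiction above does not occur.
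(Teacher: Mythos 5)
Your proposal is correct and follows essentially the same route as the paper: read off $R(N_n(m))$ from the Kupisch series, use the $m$ predecessors of the cyclic vertex $S(1)$ to force odd selfinjective dimension via Theorem~\ref{thm:even}, and then use condition (4) of Theorem~\ref{thm:odd} on $\tau^{-1}S(n-r+1)=S(n-r)$ to force $r=0$. The only difference is that you spell out the "if" direction (which the paper dismisses as routine) by verifying the hypotheses of Theorems~\ref{thm:odd} and~\ref{thm:even} explicitly, and your verification is accurate.
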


For any Nakayama algebra $A$, by \cite{STZ24}
there is a unique Nakayama algebra $\varepsilon^{-1}{A}$ 
of dominant dimension $\geq 3$ 
such that $\varepsilon(\varepsilon^{-1}{A})$ is naturally isomorphic to $A$.
The algebra $\varepsilon^{-1}A$ is given as follows.
Suppose the Kupisch series of $A$ is   
\[c(A)=(c_1,c_2,\cdots,c_{n}). \] 
Denote by $d_a$ the maximum of $c_{a}-c_{a-1}$ and $0$ for each $a\leq n$,
and by $d$ the sum of all $d_a$. 
The subscript is taken modulo $n$.
Let $\tilde{c}_a$ be a series of length $1+d_{a+1}$ with
\[(\tilde{c}_a)_\ell=c_a+\sum_{i=1}^{c_a+\ell-1}d_{a+i},\;\;1\leq \ell\leq 1+d_{a+1}.\]
Then $\varepsilon^{-1}{A}$ is a Nakayama algebra of rank $n+d$ with Kupisch series 
\[c(\varepsilon^{-1}A)=(\tilde{c}_1,\tilde{c}_2,\cdots,\tilde{c}_{n}).\] 
Note that $\chi(\varepsilon^{-1}A)$ equals the number of simple components of $A$.

Recall the following result; see \cite{Sen20,STZ24}.

\begin{prop} \label{prop:aus-cyclic}
Let $\Lambda$ be a cyclic Nakayama algebra and $d\geq 3$.
Then $\Lambda$ is higher Auslander of global dimension $d$ if and only if  $\Lambda$ is isomorphic to
\[\varepsilon^{-1}(A)\]
where $A$ is a cyclic Nakayama algebra which is higher Auslander of global dimension $d-2$ or isomorphic to
\[\varepsilon^{-1}(A_1\oplus A_2\oplus\cdots\oplus A_r)\]
where $A_i$ is a  linear Nakayama algebra which is higher Auslander of global dimension $d-2$ 
for each $i\leq r$.
\end{prop}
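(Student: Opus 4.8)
The plan is to peel off two homological degrees at a time using the syzygy filtered algebra, exactly as in Corollary~\ref{cor:reduction}. I would set things up with two elementary facts: a higher Auslander algebra is the same thing as a minimal Auslander-Gorenstein algebra of finite global dimension, and in that case its global dimension, selfinjective dimension and dominant dimension all coincide, since $\mathrm{id}_A\,A=\mathrm{gl}.\dim A$ whenever $\mathrm{gl}.\dim A$ is finite; and $\varepsilon(A)$ has finite global dimension if and only if $A$ does. It is also worth recording that a Nakayama algebra is either connected cyclic, which is the case $\chi(A)=0$, or a direct sum of linear Nakayama algebras, which is the case $\chi(A)\geq 1$; so the two alternatives in the statement are exhaustive and mutually exclusive, and $\Lambda$ is cyclic if and only if $\varepsilon(\Lambda)$ has no simple component, because $\chi(\varepsilon^{-1}(\varepsilon(\Lambda)))$ equals the number of simple components of $\varepsilon(\Lambda)$.

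For the forward implication I would assume $\Lambda$ is higher Auslander cyclic of global dimension $d\geq 3$, so that $\Lambda$ is minimal Auslander-Gorenstein of selfinjective dimension $d$ with $\mathrm{dom}.\dim\Lambda=d\geq 3$. Corollary~\ref{cor:reduction} then shows that $\varepsilon(\Lambda)$ is minimal Auslander-Gorenstein of selfinjective dimension $d-2$, and since $\Lambda$ has finite global dimension so does $\varepsilon(\Lambda)$, making it higher Auslander of global dimension $d-2$; the uniqueness of $\varepsilon^{-1}$ together with $\mathrm{dom}.\dim\Lambda\geq 3$ forces $\Lambda\cong\varepsilon^{-1}(\varepsilon(\Lambda))$. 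Writing $A=\varepsilon(\Lambda)$, the algebra $A$ has no simple component because $\Lambda$ is cyclic, so either $A$ is connected cyclic, or $A$ is linear, or $A=A_1\oplus\cdots\oplus A_r$ is a direct sum of linear Nakayama algebras. In the last two cases I would check that each $A_i$ is again higher Auslander of global dimension $d-2$: the dominant and global dimensions of a direct sum are the minimum and the maximum of those of the summands, each $A_i$ is non-semisimple, and for a non-semisimple algebra of finite global dimension the dominant dimension cannot exceed the global dimension. This places $\Lambda$ in exactly one of the two listed forms.

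For the converse I would start from $\Lambda\cong\varepsilon^{-1}(A)$ with $A$ as in the statement; in the second case $A$ itself is higher Auslander of global dimension $d-2$ by the same direct-sum formulas, so in either case $A$ is minimal Auslander-Gorenstein of selfinjective dimension $d-2$. By construction $\mathrm{dom}.\dim\Lambda\geq 3$ and $\varepsilon(\Lambda)\cong A$, so $\mathrm{dom}.\dim\Lambda=\mathrm{dom}.\dim\varepsilon(\Lambda)+2=d$ by Lemma~\ref{lem:domdim3}, and Corollary~\ref{cor:reduction} applied to $\Lambda$ yields that $\Lambda$ is minimal Auslander-Gorenstein of selfinjective dimension $d$; as $A$, hence $\Lambda$, has finite global dimension, $\Lambda$ is in fact higher Auslander of global dimension $d$. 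Finally, because $d-2\geq 1$ no connected component of $A$ is semisimple, so $A$ has no simple component, so $\chi(\Lambda)=\chi(\varepsilon^{-1}(A))=0$ and $\Lambda$ is cyclic.

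Most of this is bookkeeping around Corollary~\ref{cor:reduction} and the properties of $\varepsilon^{-1}$ recalled from \cite{STZ24}. The two places that need genuine care, carrying essentially all the content beyond invoking Corollary~\ref{cor:reduction}, are the identification of ``higher Auslander'' with ``minimal Auslander-Gorenstein of finite global dimension'' and its stability under passing to direct summands --- where one uses that the dominant dimension of a non-semisimple algebra of finite global dimension is at most its global dimension --- and the cyclicity criterion, which relies on $\chi(\varepsilon^{-1}(A))$ counting the simple components of $A$ and on a semisimple algebra not being higher Auslander of positive global dimension. I expect the direct-summand step to be the main obstacle, though it is standard.
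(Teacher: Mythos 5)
The paper does not actually prove this proposition --- it is stated with ``Recall the following result in \cite{STZ24}'' and no argument is given --- so there is no internal proof to compare against. Your argument is correct and is exactly the reduction the paper's own machinery is designed for: combine Corollary~\ref{cor:reduction} and Lemma~\ref{lem:domdim3} with the uniqueness and the $\chi$-count for $\varepsilon^{-1}$, peeling off two degrees at a time. The two auxiliary facts you flag are genuinely needed and are fine as stated: for a non-semisimple algebra of finite global dimension one has $\mathrm{dom}.\dim A\leq \mathrm{gl}.\dim A$ (otherwise the finite minimal injective coresolution of $A$ consists of projectives, splits, and forces $A$ to be self-injective, hence semisimple), which gives both the identification of higher Auslander with minimal Auslander-Gorenstein of finite global dimension and the passage to direct summands via the min/max formulas for dominant and global dimension of a direct sum; and the cyclicity bookkeeping via $\chi(\varepsilon^{-1}(A))$ equalling the number of simple components of $A$ is exactly what the paper records just before the proposition. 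I see no gap.
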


By Proposition \ref{prop:reduction}, we have the following.

\begin{prop}\label{prop:aus-linear}
Let $\Lambda$ be a linear Nakayama algebra and $d\geq 3$.
Then $\Lambda$ is higher Auslander of global dimension $d$ if and only if  $\Lambda$ is isomorphic to
\[\varepsilon^{-1}(\bbA_1\oplus A_1\oplus\cdots\oplus A_r)\]
where $A_i$ is a linear Nakayama algebra which is higher Auslander of global dimension $d-2$ for 
each $i\leq r$.
\end{prop}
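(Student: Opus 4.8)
The plan is to run the same reduction as in Proposition \ref{prop:aus-cyclic}, keeping track of the unique simple projective module that a connected linear Nakayama algebra carries. Recall first that for a Nakayama algebra, being higher Auslander of global dimension $d$ is the same as being minimal Auslander-Gorenstein of selfinjective dimension $d$ together with $\mathrm{gl}.\dim A=d$ (if $\mathrm{dom}.\dim A=\mathrm{gl}.\dim A=d$ then, gl.dim being finite, $d=\mathrm{dom}.\dim A\le\mathrm{id}_A\,A\le\mathrm{gl}.\dim A=d$). Combining this with Lemma \ref{lem:domdim3}, Corollary \ref{cor:reduction}, the fact that $\varepsilon$ preserves finiteness of global dimension, and the $\varepsilon^{-1}$-construction of \cite{STZ24}, one obtains the higher Auslander form of the reduction: if $\mathrm{dom}.\dim A\geq 3$, then $A$ is higher Auslander of global dimension $d$ if and only if $\varepsilon(A)$ is higher Auslander of global dimension $d-2$, and moreover $A\cong\varepsilon^{-1}(\varepsilon(A))$.

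For the ``only if'' direction, let $\Lambda$ be a connected linear Nakayama algebra which is higher Auslander of global dimension $d\geq 3$. Then $\mathrm{dom}.\dim\Lambda=d\geq 3$, so $\Lambda\cong\varepsilon^{-1}(\varepsilon(\Lambda))$ and $\varepsilon(\Lambda)$ is higher Auslander of global dimension $d-2$. Since a connected linear Nakayama algebra has exactly one simple projective module, $\chi(\Lambda)=1$; applying the identity $\chi(\varepsilon^{-1}B)=(\text{number of simple components of }B)$ with $B=\varepsilon(\Lambda)$ shows that $\varepsilon(\Lambda)$ has precisely one summand isomorphic to $\bbA_1$. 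Write $\varepsilon(\Lambda)=\bbA_1\oplus A_1\oplus\cdots\oplus A_r$ with the remaining components $A_i$ not semisimple. Each $A_i$ is a connected component of a higher Auslander algebra of global dimension $d-2\geq 1$, hence $\mathrm{dom}.\dim A_i\geq d-2$ while $\mathrm{gl}.\dim A_i\leq d-2<\infty$; since $\mathrm{dom}.\dim\leq\mathrm{id}\leq\mathrm{gl}.\dim$ for algebras of finite global dimension and $A_i$ is not semisimple, this forces $\mathrm{dom}.\dim A_i=\mathrm{gl}.\dim A_i=d-2$, i.e. $A_i$ is higher Auslander of global dimension $d-2$. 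It remains to check that no $A_i$ is cyclic, so that each $A_i$ is linear; this is the one genuinely new point and it reduces to the structural fact that the reversed syzygy filtered algebra of a connected linear Nakayama algebra is a direct sum of linear Nakayama algebras together with a single copy of $\bbA_1$ accounting for its simple projective, which I would read off from the exact sequences \eqref{eq:elementary} and \eqref{eq:inj} and the shape of the Kupisch series.

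For the ``if'' direction, let $\Lambda\cong\varepsilon^{-1}(B)$ with $B=\bbA_1\oplus A_1\oplus\cdots\oplus A_r$ and each $A_i$ linear higher Auslander of global dimension $d-2\geq 1$. The summand $\bbA_1$ has dominant dimension $\infty$ and global dimension $0$, so $B$ is higher Auslander of global dimension $d-2$. By construction $\mathrm{dom}.\dim\Lambda\geq 3$ and $\varepsilon(\Lambda)\cong B$, so by the reduction of the first paragraph $\Lambda$ is higher Auslander of global dimension $d$. Finally $\Lambda$ is linear: $B$ has exactly one $\bbA_1$-summand, so $\chi(\Lambda)=\chi(\varepsilon^{-1}B)=1$, and a connected Nakayama algebra possessing a simple projective module is linear; that $\varepsilon^{-1}(B)$ is connected follows from the same analysis of the construction as in the previous paragraph. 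When $d=3$ the algebras $A_i$ are the hereditary algebras $\bbA_{n_i}$ with $n_i\geq 2$, which is consistent with the description of $N_n(m)$ recorded above.

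The main obstacle is exactly the pair of linearity assertions used above: that $\varepsilon$ carries a connected linear Nakayama algebra to a direct sum of linear Nakayama algebras plus one copy of $\bbA_1$, and dually that $\varepsilon^{-1}$ of such a sum is again connected and linear. Everything else is a transcription of the cyclic case; these two facts are where one must engage with the combinatorics of the Kupisch series of $\varepsilon^{-1}A$ recalled above and with identifying which base modules in \eqref{eq:elementary} are simple projective.
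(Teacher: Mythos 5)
The paper gives no written proof of this proposition; it simply says ``Similarly'' after recalling Proposition~\ref{prop:aus-cyclic} from \cite{STZ24}, and your reduction (higher Auslander of global dimension $d$ $=$ minimal Auslander--Gorenstein of selfinjective dimension $d$ plus finite global dimension, then Lemma~\ref{lem:domdim3} and Corollary~\ref{cor:reduction} to pass between $\Lambda$ and $\varepsilon(\Lambda)$, with $\chi(\varepsilon^{-1}B)$ counting simple components) is exactly the intended route. So the skeleton of your argument is right.

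The two points you flag as ``the main obstacle'' --- that every non-simple component $A_i$ of $\varepsilon(\Lambda)$ is linear, and that $\varepsilon^{-1}(\bbA_1\oplus A_1\oplus\cdots\oplus A_r)$ is connected and linear --- are genuine omissions as written, but they are much easier than you suggest: no analysis of the exact sequences \eqref{eq:elementary} and \eqref{eq:inj} or of the Kupisch series of $\varepsilon^{-1}A$ is needed. The relevant elementary fact is that a Nakayama algebra $B$ (a quotient of $k\Delta_m$) with $\chi(B)\geq 1$ decomposes into exactly $\chi(B)$ connected components, each of which is linear: every vertex $a$ with $c_a=1$ cuts the cycle $\Delta_m$, so once there is at least one simple projective there can be no cyclic component, and each linear segment carries exactly one simple projective. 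For the ``only if'' direction, $\chi(\varepsilon(\Lambda))\geq\chi(\Lambda)=1$ by the lemma on $\chi(\varepsilon A)-\chi(A)$, so $\varepsilon(\Lambda)$ is automatically a direct sum of connected linear algebras, exactly one of which is $\bbA_1$ (by $\chi(\varepsilon^{-1}\varepsilon(\Lambda))=\chi(\Lambda)=1$); for the ``if'' direction, $\chi(\varepsilon^{-1}B)=1$ forces $\varepsilon^{-1}B$ to have a unique simple projective and hence to be connected linear by the same fact. With these two sentences inserted, your proof is complete; you might also note explicitly that $r\geq 1$ (otherwise $\varepsilon^{-1}(\bbA_1)=\bbA_1$ has global dimension $0\neq d$), which is implicit in the statement.
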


\begin{exm}
Let $A$ be a Nakayama algebra with Kupisch series 
\[(1,3,2,1,2,1).\]
Then $\varepsilon^{-1}(A)$  is higher Auslander of global dimension $3$ with Kupisch series
\[(3,3,3,4,3,2,2,2,1).\]
\end{exm}

Nakayama algebras of global dimension $1$ are hereditary, 
and Nakayama algebras which are higher Auslander of global dimension  $2$ are  
the Auslander algebras of Nakayama algebras with radical square zero.
For any $d\geq 3$,  we can repeatedly apply Proposition \ref{prop:aus-linear} and \ref{prop:aus-cyclic} 
to get any connected Nakayama algebra which is higher Auslander of global dimension $d$.

\begin{exm}
Let $n\geq m\geq 0$ and $A$ be a non-simple linear Nakayama algebra. 
The  algebra
\[\Lambda=\varepsilon^{-n}(\bbA_1^m\oplus A)\]
is a connected Nakayama algebra; it is linear if and only if $n=m$. 
We have 
\begin{enumerate}
    \item $\varepsilon^{-m}(\bbA_1^m\oplus\bbA_n)\cong N_{mn+n}(n)$ for any $m\geq 1$ and $n\geq 2$;
    \item $\varepsilon^{-m}(\bbA_1^m\oplus N_{3}(2))\cong N_{2m+3}(2)$ for any $m\geq 1$.
\end{enumerate}
\end{exm}

\end{document}